\DeclareMathOperator*{\argmax}{argmax}
\DeclareMathOperator*{\argmin}{argmin}
\DeclareMathOperator*{\rank}{rank}
\DeclareMathOperator*{\cone}{cone}
\newcommand{\reals}{\mathbb{R}}
\newcommand{\Sn}{{\mathcal S}(n)}
\newcommand{\Ln}{{\mathcal L}_n}
\newcommand{\Lnk}{{\mathcal L}_{n,k}}
\newcommand{\Qnk}{{\mathcal Q}_{n,k}}
\begin{document}

\title{Clustering with Semidefinite Programming and \\ Fixed Point Iteration}
\author{\name Pedro Felzenszwalb \email pff@brown.edu \\
  \addr School of Engineering and Department of Computer Science \\
  Brown University \\
  Providence, RI 02912, USA
  \AND
  \name Caroline Klivans \email klivans@brown.edu \\
  \addr Division of Applied Mathematics \\
  Brown University \\
  Providence, RI 02912, USA
  \AND Alice Paul \email alice\_paul@brown.edu \\
  \addr Department of Biostatistics \\
  Brown University \\
  Providence, RI 02912, USA}

\editor{Silvia Villa}
\maketitle

\begin{abstract}%
    We introduce a novel method for clustering using a semidefinite
    programming (SDP) relaxation of the Max $k$-Cut problem.  The
    approach is based on a new methodology for rounding the solution
    of an SDP relaxation using iterated linear optimization.  We show
    the vertices of the Max $k$-Cut relaxation correspond to
    partitions of the data into at most $k$ sets.  We also show the
    vertices are attractive fixed points of iterated linear
    optimization.  Each step of this iterative process solves a
    relaxation of the closest vertex problem and leads to a new
    clustering problem where the underlying clusters are more clearly
    defined.  Our experiments show that using fixed point iteration
    for rounding the Max $k$-Cut SDP relaxation leads to significantly
    better results when compared to randomized rounding.
\end{abstract}

\begin{keywords}
Clustering, Semidefinite programming, Optimization.
\end{keywords}

\section{Introduction}
\label{sec:intro}

Semidefinite programming (SDP) relaxations have led to significant
advances in the development of combinatorial optimizaton algorithms.
This includes a variety of problems with applications to machine
learning.  Many challenging optimization problems can be approximately
solved by a combination of an SDP relaxation and a rounding step.  One
of the best examples of this paradigm is the celebrated Max Cut
approximation algorithm of \cite{Goemans}.

From a theoretical point of view algorithms based on SDP relaxations
can lead to strong approximation guarantees.  However, such
approximation guarantees do not always translate to practical
solutions.  Many algorithms with good theoretical guarantees rely on
randomized rounding methods that can produce solutions that have
undesirable artifacts despite having high objective value.
This motivates the development of effective deterministic methods for
rounding the solutions of SDP relaxations.  Recent advances based on
the sum-of-squares hierarchy have also motivated the development of
new general methods for rounding the solutions of SDP relaxations
(see, e.g., \cite{Barak}).

In this paper we introduce a novel method for clustering using the Max
$k$-Cut SDP relaxation described in \cite{Frieze}.  The approach is
based on a new methodology for rounding the solution of an SDP
relaxation introduced by the current authors in \cite{iter}.  Our
rounding method involves fixed point iteration with a map that
optimizes a linear function over a convex body.
Figure~\ref{fig:example} shows a clustering example, comparing the
result of our fixed point iteration method for rounding the solution
of the Max $k$-Cut relaxation to the result obtained using the
randomized rounding method in \cite{Frieze}.

\begin{figure}
\centering
  \subfigure[Input data]{\includegraphics[width=2.8in]{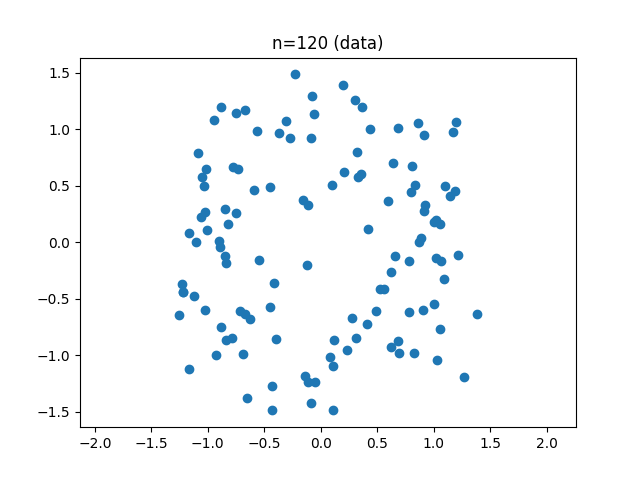}}
  
  \subfigure[Randomized rounding (best of 50 trials)]{\includegraphics[width=2.8in]{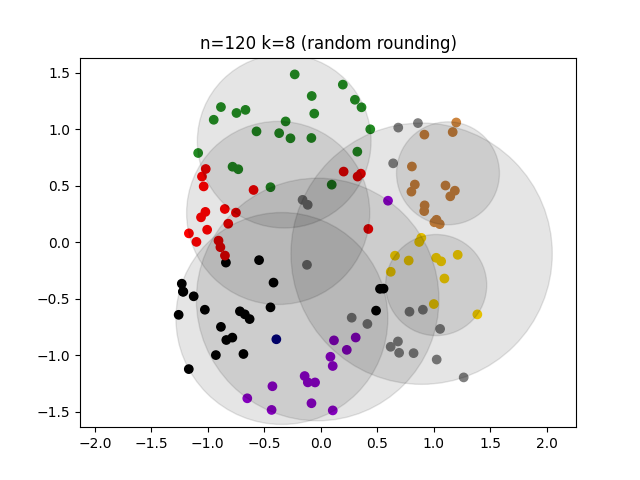}}
  \subfigure[Fixed point iteration]{\includegraphics[width=2.8in]{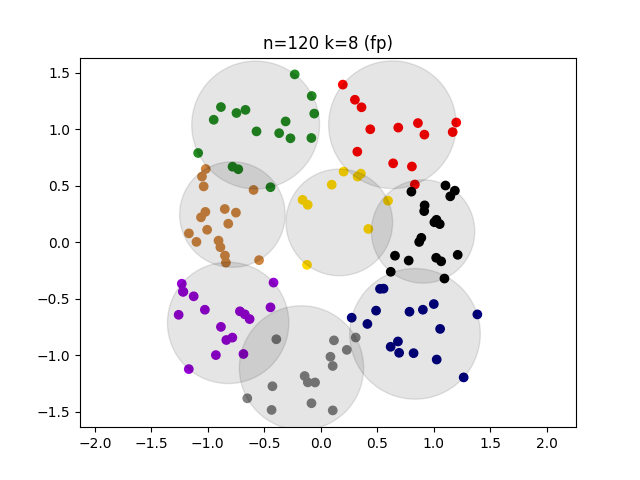}}
  
  \subfigure[Sequence of solutions generated by fixed point iteration]{\includegraphics[trim=0 130 45 130, clip, width=7in]{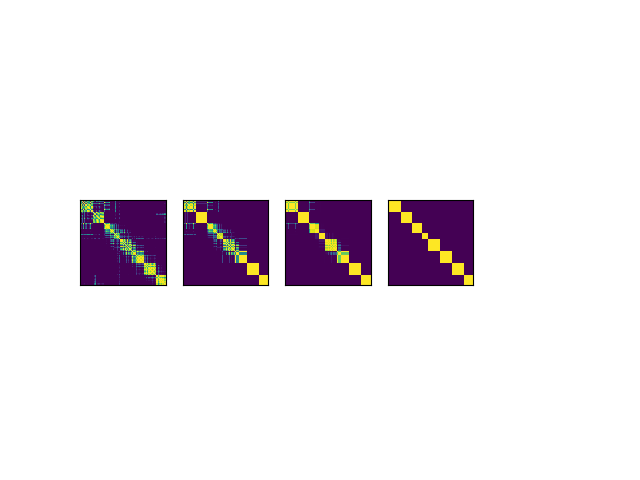}}
  
  \caption{Clustering 120 points into 8 clusters using the Max $k$-Cut SDP relaxation: (a) input data, (b)
    clustering using randomized rounding, and (c) clustering using
    fixed point iteration. Each cluster is shown with a different
    color and a minimal enclosing circle.  A solution to the Max
    $k$-Cut SDP relaxation is a matrix in the $k$-way elliptope.  (d)
    illustrates the sequence of matrices obtained by fixed point
    iteration.  The final matrix is an
    integer solution defining a clustering.  }
\label{fig:example}
\end{figure}

The SDP relaxation for Max $k$-Cut involves linear optimization over a
convex body that we call the $k$-way elliptope.  In
Section~\ref{sec:kway} we show that the vertices of the $k$-way
elliptope correspond to partitions (clusterings) of the data into at
most $k$ sets, generalizing the result from \cite{Laurent} for the
elliptope (the $k=2$ case).

As pointed out already by the authors of \cite{Frieze}, the randomized
rounding method for the Max $k$-Cut SDP relaxation
has some significant shortcomings.  The approximation
factor of the randomized algorithm appears good on the surface, but is
not much better than the approximation factor one gets by simply randomly
partitioning the data.  Randomized rounding often generates a
partition with fewer than $k$ sets.  The result in
Figure~\ref{fig:example}(b) is a partition with 7 non-empty clusters
despite the fact that $k=8$.  We also see in
Figure~\ref{fig:example}(b) that the resulting clusters are not
compact.  Instead different clusters have significant overlap.  In
Section~\ref{sec:experiments} we compare our fixed point iteration
method to the randomized rounding procedure in several examples,
showing that the fixed point approach can produce much better
clusterings in practice.   

The work in \cite{Mahajan} gives a general method for derandomizing
approximation algorithms based on SDP relaxations, including the Max
$k$-Cut relaxation we use for clustering.  Although the method in
\cite{Mahajan} is interesting from a theoretical point of view, the
approach is not practical for problems of non-trivial size.  More
recent methods for derandomizing the Max Cut approximation algorithm
include \cite{odonnell} and \cite{bhargava}.  These methods are all
based on the randomized rounding method of \cite{Goemans} but replace
the randomization with a search over a limited number of discretized
choices.  In contrast, our fixed point iteration method is not based
on derandomization techniques and is instead based on a novel approach
for rounding the solution of an SDP relaxation.

Intuitively the problem of rounding a solution of the SDP relaxation
for Max $k$-Cut can be interpreted as a new clustering problem. This
motivates the iterative nature of our algorithm.  Our rounding
procedure solves a sequence of SDP problems where the underlying
clusters become more clearly defined in each iteration.  In
\cite{iter} we showed that iterated linear optimization in a convex
region always converges to a fixed point.  In the case of the $k$-way
elliptope the integer solutions to the Max $k$-Cut problem are
attractive fixed points.  Additionally, when rounding the solution of
the SDP relaxation, we ideally round to the closest vertex ($k$-way
partition).  We show in Section~\ref{sec:iter} that each iteration of
our rounding procedure corresponds to a relaxation of this objective.

\cite{mixon2017clustering} also solve an SDP relaxation to cluster
subgaussian mixtures. However, rather than round this SDP solution
directly, they revert back to employing Lloyd's algorithm to partition
the columns of the solution. Our method, instead, continues to use the
strength of semidefinite programming to recover a partition.

For some applications convex relaxations have been shown to recover
the ``true'' hidden structure in the data (see, e.g., \cite{Candes}).
For clustering applications it was shown in \cite{Ward} and
\cite{mixon2017clustering} that convex relaxations can recover a
ground truth clustering if the data is sufficiently well-separated.
However, in practice the data is rarely well-separated (and there is
often no ground truth clustering).  Nonetheless, good clusterings
might exist that can be extremely useful for data processing, coding
or analysis.  The data in Figure~\ref{fig:example} illustrates an
example of this situation.  The data was generated by sampling from
several Gaussian distributions with significant overlap.  In this case
there is no way to recover the ground truth clustering, but the result
of our fixed point iteration method still provides a good solution
that can be used for subsequent analysis.

In Section~\ref{sec:clustering} we discuss how Max $k$-Cut can be used
to formulate the clustering problem.  In Section~\ref{sec:sdp} we
review the Max $k$-Cut SDP relaxation and the randomized rounding
method from \cite{Frieze}.  In Section~\ref{sec:kway} we study the
convex body that arises from the SDP relaxation and show the vertices
of the feasible region correspond to partitions.  In
Section~\ref{sec:iter} we describe how iterated linear optimization
leads to a deterministic method for rounding the solution of the SDP
relaxation.  The approach solves a sequence of relaxations to the
closest vertex problem, and leads to clusters that are more clearly
defined in each iteration.  Finally, in Section~\ref{sec:experiments}
we illustrate experimental results of our new rounding method and
compare them to the randomized rounding approach from \cite{Frieze}.
We also compare the result of our method to the $k$-means algorithm
for clustering images in the MNIST handwritten digit dataset.

\section{Clustering with Max $k$-Cut}
\label{sec:clustering}

Clustering problems are often formulated using pairwise measures of
similarity or dissimilarity between objects.  Intuitively we would
like to partition the data so that pairs of objects within a cluster
are similar to each other while pairs of objects in different clusters
are dissimilar.  This natural idea leads to a variety of formulations
of clustering as graph partition problems. One benefit of graph-based
approaches to clustering is the ability to define pairwise measures
that incorporate both categorical and numerical variables (see,
e.g.\ \cite{bertsimas2021interpretable}).

Let $G=(V,E)$ be a weighted graph.  Let $[n] = \{1,\ldots,n\}$.  To
simplify notation we assume throughout that $V=[n]$ and that the graph
is complete.  A $k$-partition is a partition of $V$ into $k$ disjoint
sets $(A_1,\ldots,A_k)$, some of which may be empty.  Let $M$ be a
symmetric matrix of pairwise non-negative weights.  The weight of a
partition $P$ is the sum of the weights of the pairs $\{i,j\}
\subseteq [n]$ that are split (or cut) by $P$,
$$w(P) = \sum_{1 \le r < s \le k}\;\; \sum_{i \in A_r}\; \sum_{j \in A_s} M_{i,j}.$$
The Max $k$-Cut problem is to find a $k$-partition maximizing $w(P)$.  

As an example, let $D=\{x_1,\ldots,x_n\}$ be $n$ points in $\reals^d$.
One of the most commonly used formulations for clustering data in
Euclidean space involves optimizing the $k$-means objective.  For $A
\subseteq [n]$ let $m(A)$ be the mean of the points indexed by $A$,
$$m(A) = \frac{1}{|A|} \sum_{i \in A} x_i.$$ The $k$-means objective
(\ref{eqn:kmeans}) is to partition the data into clusters to minimize
the sum of squared distances from each point to the center of its
cluster,
\begin{equation}
\label{eqn:kmeans}
\argmin_{(A_1,\ldots,A_k)} \; \sum_{r=1}^k \; \sum_{i \in A_r} ||x_i-m(A_i)||^2.
\end{equation}
This objective encourages partitions of the data into ``compact''
clusters, and is widely used both for clustering and for vector
quantization in coding applications (see, e.g.,
\cite{Lloyd,Mackay,Jain,kmeanspp}).

Let $M_{i,j} = ||x_i-x_j||^2$ and consider the Max $k$-Cut objective
with these weights.  Maximizing the weight of the pairs
$\{i,j\}$ that are split by a partition is the same as minimizing the
weights of the pairs $\{i,j\}$ that are not split.  Moreover, the sum
of squared distances between points within a set $A_i$ can be
expressed in terms of the sum of squared distances between each point
in $A_i$ and the mean of the set:
\begin{align}
\argmax w(A_1,\ldots,A_k) & = \argmin_{(A_1,\ldots,A_k)} \; \sum_{r=1}^k \; \sum_{i,j \in A_r} ||x_i-x_j||^2,\\
                          & = \argmin_{(A_1,\ldots,A_k)} \; \sum_{r=1}^k \; |A_r| \sum_{i \in A_r} ||x_i-m(A_r)||^2.
                          \label{eqn:maxcutEuclidean}
\end{align}
The objective function (\ref{eqn:maxcutEuclidean}) is similar to the
$k$-means objective (\ref{eqn:kmeans}), except that in the case of Max
$k$-Cut there is a preference towards balanced partitions.  In
Section~\ref{sec:experiments} we illustrate the results of clustering
experiments with Max $k$-Cut using this formulation.

The work in \cite{wang} also considered clustering with Max $k$-Cut
and used an SDP relaxation to solve the resulting problem.  That work
focused on an information theoretical formulation of the clustering
problem that could also be used within our framework.  SDP relaxations
of the $k$-means objective for clustering have also been considered in
\cite{Peng}, \cite{Ward}, and \cite{mixon2017clustering}.

Most of the previous work on clustering using graph-based methods has
focused on formulations based on minimum cuts and spectral algorithms
(\cite{Shi,Meila,Ng,Weiss,Kannan}). In this case the weight of an edge
represents similarity (or affinity) between elements instead of
dissimilarity.  Minimum cut formulations often include some form of
normalization (see, e.g., \cite{Shi,Kannan}) or a balance requirement
to avoid trivial partitions (otherwise the cut is minimized when one
partition is very small).  In contrast to minimum cut formulations,
clustering using a maximum cut formulation naturally encourages
balanced partitions, as they maximize the total number of edges that
are split.

\section{SDP Relaxation for Max $k$-Cut}
\label{sec:sdp}

The \cite{Goemans} approximation algorithm for Max Cut
(clustering into \emph{two} clusters) is based on an SDP relaxation
and a randomized rounding method.  The relaxation
involves the optimization of a linear function over a convex body
$\Ln$ known as the \emph{elliptope}.
\cite{Laurent} showed that the vertices of $\Ln$ correspond to
bipartitions of $[n]$.  The fact that the vertices of $\Ln$ are
bipartitions gives an explanation as to why in some cases the SDP
relaxation can lead directly to an integer solution that is an
\emph{optimal} solution to the Max Cut problem (see also
\cite{Cifuentes}).

\cite{Frieze} generalized the approximation algorithm of Goemans and
Williamson to a randomized approximation method for Max $k$-Cut.  In
this case the algorithm involves linear optimization over a convex
body $\Lnk$ that we call the \emph{$k$-way elliptope}.

Below we briefly review the SDP relaxation and randomized rounding
method for Max $k$-Cut introduced by \cite{Frieze}.  In
Section~\ref{sec:kway} we show the vertices of the $k$-way elliptope
correspond to $k$-partitions, generalizing the result from
\cite{Laurent} for the elliptope.  In Section~\ref{sec:iter} we
describe a determinisic method for rounding the solution of the SDP
relaxation for Max $k$-Cut based on iterated linear optimzation.

The Max $k$-Cut SDP relaxation is based on a reformulation of the
combinatorial problem in terms of Gram matrices.  Let $a_1,\ldots,a_k$
be the vertices of an equilateral simplex $\Sigma_k$ in $\reals^{k-1}$
centered around the origin and scaled such that $||a_i|| = 1$.  

A $k$-partition $P=(A_1,\ldots,A_k)$ can be encoded by $n$ vectors
$(y_1,\ldots,y_n)$ with $y_i = a_j$ if $i \in A_j$.  Define the
$k$-partition matrix $X^P$ to be the Gram matrix of
$(y_1,\ldots,y_n)$.  For $i
\neq j$ we have $a_i \cdot a_j = -1/(k-1)$.  Therefore,
\begin{align}
X^P_{i,j} & = 
\begin{cases}
   1 & \text{if $\{i,j\}$ are together in $P$} \\
   -1/(k-1) & \text{if $\{i,j\}$ are split by $P$}
\end{cases} \\
1-X^P_{i,j} & =
\begin{cases}
   0 & \text{if $\{i,j\}$ are together in $P$} \\
   k/(k-1)\phantom{-} & \text{if $\{i,j\}$ are split by $P$}
\end{cases}
\end{align}

For two $n$ by $n$ matrices $X$ and $Y$ let
$$X \cdot Y = \sum_{i=1}^{n} \sum_{j=1}^{n} X_{i,j} Y_{i,j}.$$

Now the weight of a partition, $w(P)$, can be written as,
$$w(P) = \frac{k-1}{2k} (1-X^P) \cdot M,$$
where $M$ is the symmetric matrix of pairwise weights.

\begin{definition}
The set of \emph{$k$-partition matrices} $\Qnk$ is the set of Gram
matrices of $n$ vectors $(y_1,\ldots,y_n)$ with $y_i \in
\{a_1,\ldots,a_k\}$ for $1 \le i \le n$.
\end{definition}

We can reformulate the Max $k$-Cut problem as an optimization over
$k$-partiton matrices,
$$\argmax_{X \in \Qnk} \frac{k-1}{2k} (1-X) \cdot M.$$ The SDP
relaxation of Max $k$-Cut is based on relaxing the requirement that
$y_i \in \{a_1,\ldots,a_n\}$ in the definition of $\Qnk$ to allow
$y_i$ to be any unit vector in $\reals^n$, with the additional
constraint that $y_i \cdot y_j \ge -1/(k-1)$.

Let $\Sn \subset \reals^{n \times n}$ be the set of $n \times n$
symmetric matrices.

\begin{definition}
The \emph{elliptope} $\Ln$ is the subset of matrices in $\Sn$ that are
positive semidefinite and have all 1’s on the diagonal:
$$\Ln = \{ X \in \Sn \,|\, X \succcurlyeq 0, X_{i,i} =
1 \}.$$
\end{definition}
The matrices in $\Ln$ exactly correspond to Gram matrices of $n$ unit
vectors $(y_1,\ldots,y_n)$ in $\reals^n$ (\cite{Goemans,Laurent}).

\begin{definition}
The \emph{$k$-way elliptope} $\Lnk$ is the subset of
matrices in $\Ln$ where every entry is at least $-1/(k-1)$:  
$$\Lnk = \{ X \in \Ln \,|\, X_{i,j} \ge -1/(k-1) \}.$$
\end{definition}
The matrices in $\Lnk$ correspond to Gram matrices of $n$ unit vectors
$(y_1,\ldots,y_n)$ in $\reals^n$ with $y_i \cdot y_j \ge -1/(k-1)$.

The randomized algorithm in \cite{Frieze} involves the SDP relaxation,
$$\argmax_{X \in \Lnk} \frac{k-1}{2k} (1-X) \cdot M.$$

Let $X$ be an optimal solution to the SDP relaxation.  We can interpret $X$
as the Gram matrix of $n$ unit vectors $(y_1,\ldots,y_n)$ in
$\reals^n$, obtained using a Cholesky decomposition $X=VV^T$.  To
generate a $k$-partition the rounding method in \cite{Frieze}
selects $k$ unit
vectors $(u_1,\ldots,u_k)$ independently from a uniform
distribution and assigns $y_i$ to the closest vector $u_j$.

When $k>2$ rounding a $k$-partition matrix $X^P$ using the randomized
procedure above can generate a partition that is different from $P$
because different sets in $P$ can be merged.  More generally 
randomized rounding often generates a partition with fewer than
$k$ sets because some vector $u_j$ is not the closest vector to any
$v_i$.  In Section~\ref{sec:experiments} we show experimental results
that illustrate several problems that arise in practice when using the
randomized rounding procedure.  Even when the cut value of the random
partition is relatively high, the resulting clustering can have
undesirable artifacts.

\section{The $k$-way elliptope}
\label{sec:kway}

The main result of this section is that the vertices of $\Lnk$ are the
matrices in $\Qnk$ and correspond to $k$-partitions of $[n]$.  Note
that a $k$-partition may have some empty sets, so the vertices of
$\Lnk$ correspond to partitions of $[n]$ into at most $k$ sets.

For a matrix $X \in {\mathcal L}_{3,k}$ we have
$$X =
\begin{pmatrix}
1 & x & y \\
x & 1 & z \\
y & z & 1
\end{pmatrix}.$$
Therefore we can visualize $X$ as a point $(x,y,z) \in \mathbb{R}^3$.      

\begin{figure}
  \centering
  \vspace{-1cm}
  \includegraphics[width=4in]{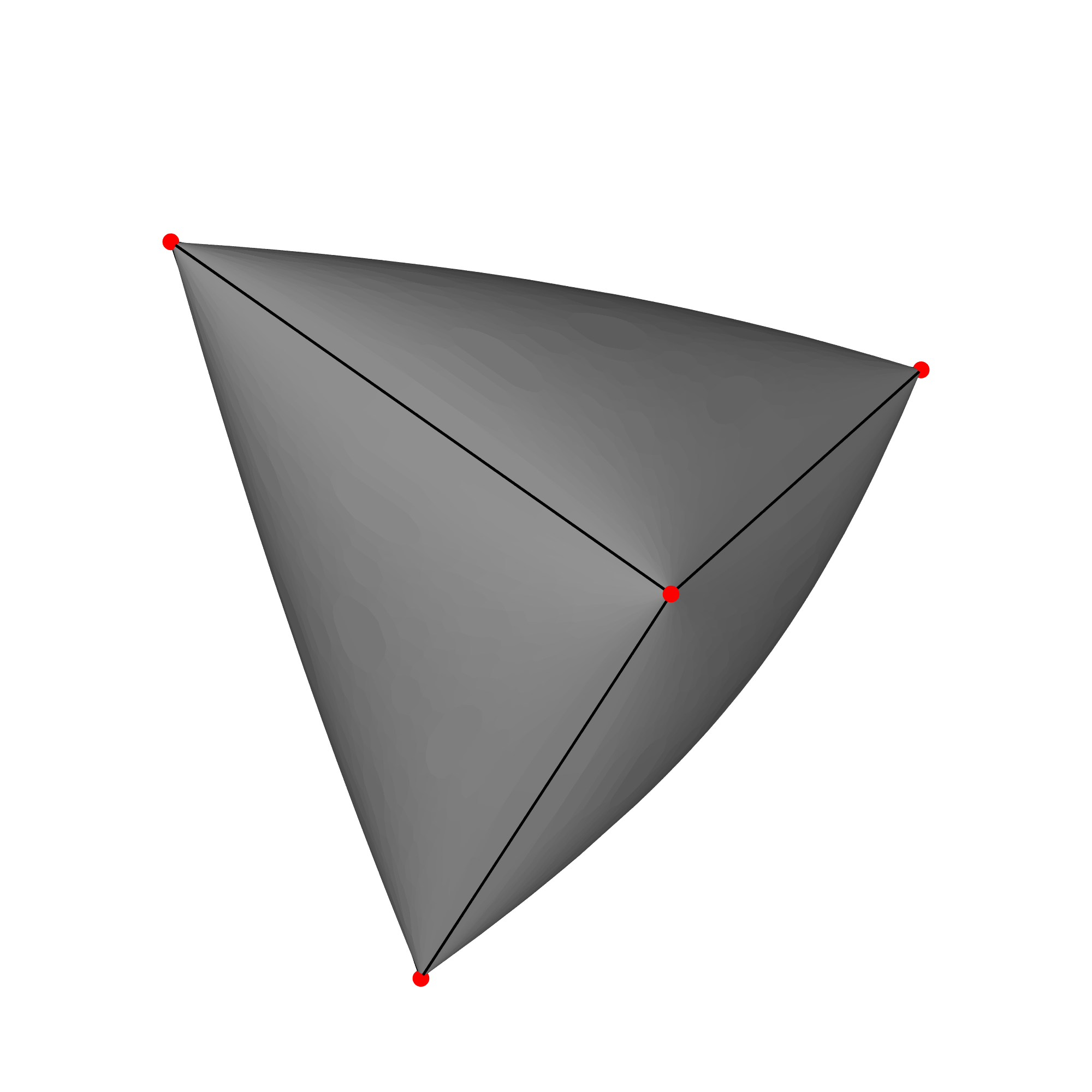}
  \caption{The $2$-way elliptope ${\cal L}_{3,2}$ has 4 vertices (red points),
  corresponding to partitions of 3 distinguished elements into at most 2 sets.} 
    \label{fig:elliptope}
\end{figure}

\begin{figure}
  \centering
  \vspace{-1cm}
  \includegraphics[width=4in]{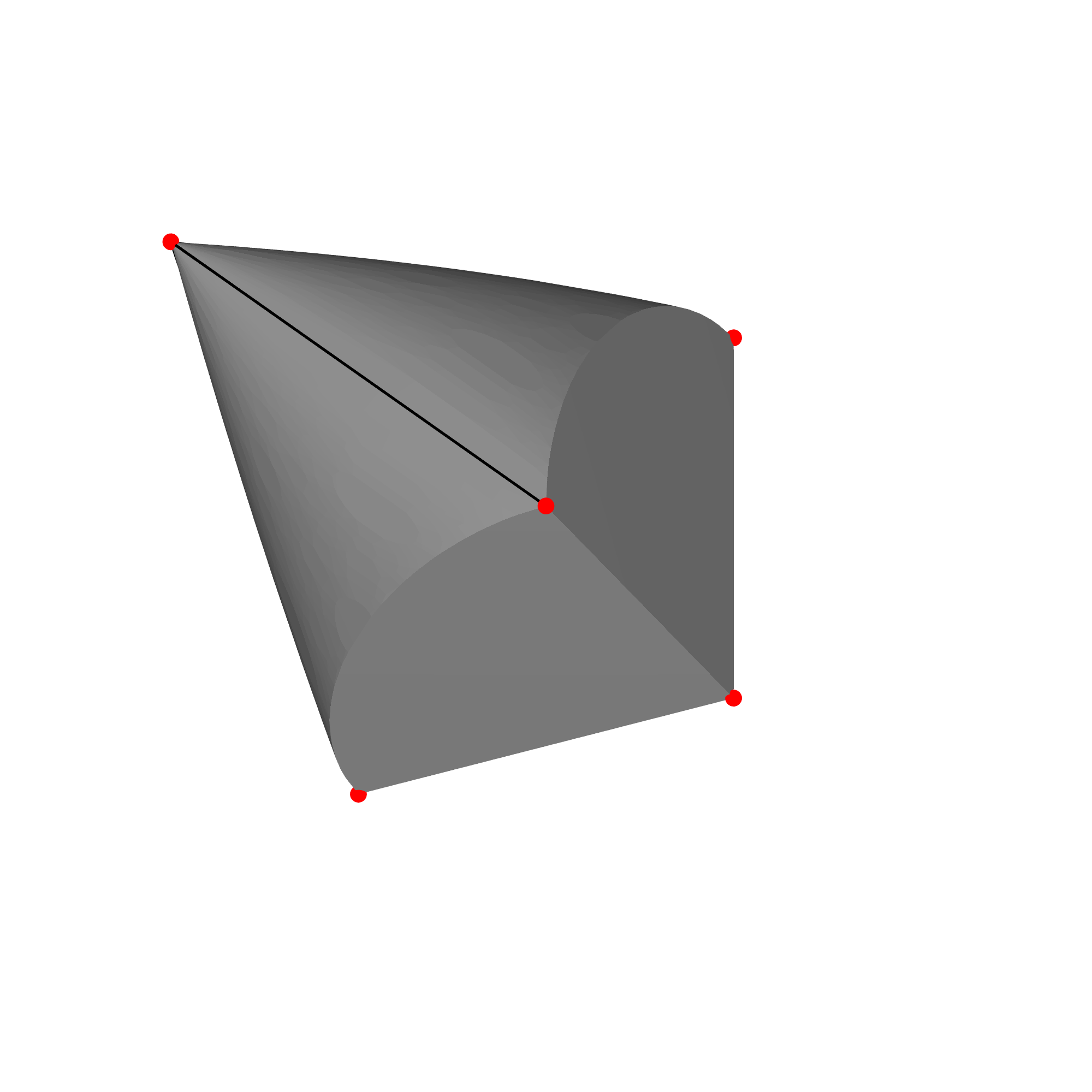}
  \vspace{-1.4cm}
  \caption{The $3$-way elliptope ${\cal L}_{3,3}$ has 5 vertices (red points),
    corresponding to partitions of 3 distinguished elements into
    at most 3 sets.}
  \label{fig:kway}
\end{figure}

Figure~\ref{fig:elliptope} illustrates ${\cal L}_{3,2}$.  
This convex body has 4 vertices, with one vertex for each partition of 3
distinguished elements into at most 2 sets.  The partitions are listed below.

$P_1 = (\{1,2,3\},\emptyset)$

$P_2 = (\{1,2\},\{3\})$

$P_3 = (\{1,3\},\{2\})$

$P_4 = (\{2,3\},\{1\})$

Figure~\ref{fig:kway} illustrates ${\cal L}_{3,3}$.  
This convex body has 5 vertices, with one vertex for each 
partition of 3 distinguished
elements into at most 3 sets.  The partitions are listed below.

$P_1 = (\{1,2,3\},\emptyset,\emptyset)$

$P_2 = (\{1,2\},\{3\},\emptyset)$

$P_3 = (\{1,3\},\{2\},\emptyset)$

$P_4 = (\{2,3\},\{1\},\emptyset)$

$P_5 = (\{1\},\{2\},\{3\})$

Note that $\Lnk$ is simply the elliptope intersected with an orthant.
This characterization is useful to understand the geometric and
combinatorial structure of $\Lnk$.  The difference between ${\cal
  L}_{n,r}$ and ${\cal L}_{n,s}$ is the amount by which the
intersecting orthant is translated.

If we translate the orthant continuously from $0$ to $-1/(n-1)$ the
vertex that represents the grouping of all elements into a single set
remains fixed.  This vertex is the matrix of all 1's.  On the other
hand, the vertex that represents the partition of all elements into
different sets only appears when the orthant reaches $-1/(n-1)$.  This
vertex is the $n$-partition matrix with 1's on the diagonal and
$-1/(n-1)$ in the off diagonal entries.

Let $\Delta$ be a convex subset of $\reals^n$.  For $x \in
\Delta$, the \emph{normal cone of $\Delta$ at $x$} is the set
$$N(\Delta,x) = \{ y \in \reals^n \,|\, y \cdot x \ge y \cdot z \;\; \forall z \in \Delta \}.$$

A vertex of $\Delta$ is (by definition) a point with a full-dimensional normal cone.

The following result shows that the normal cone of $\Lnk$ at a $k$-partition
matrix is full-dimensional.

\begin{proposition}
  If $X \in \Qnk$ and $||Y-X|| < 1/(k-1)$ then $Y \in N(\Lnk,X)$.
  \label{prop:attract}
\end{proposition}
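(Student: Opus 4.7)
The plan is to verify the defining property of the normal cone directly: show $Y \cdot Z \le Y \cdot X$ for every $Z \in \Lnk$. Writing $Y = X + E$ with $\|E\| < 1/(k-1)$, this reduces to establishing
\[X \cdot (X - Z) + E \cdot (X - Z) \;\ge\; 0 \quad \text{for every } Z \in \Lnk.\]
The leading term has to control the perturbation, and the key idea is that because every off-diagonal entry of $X \in \Qnk$ sits at one of the extreme values $1$ or $-1/(k-1)$ permitted by the constraints defining $\Lnk$, the quantity $X \cdot (X-Z)$ admits a lower bound that is \emph{linear} (not merely quadratic) in a suitable norm of $X - Z$.

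The first step is a sign analysis. The diagonal of $X-Z$ vanishes. For $i \ne j$: if $X_{ij} = 1$ then $Z_{ij} \le 1$ (from the $2 \times 2$ principal minor of $Z$ being PSD), so $(X-Z)_{ij} \ge 0$; if $X_{ij} = -1/(k-1)$ then the defining inequality of $\Lnk$ gives $Z_{ij} \ge -1/(k-1)$, so $(X-Z)_{ij} \le 0$. In either case $(X-Z)_{ij}$ shares the sign of $X_{ij}$, so $X_{ij}(X-Z)_{ij} = |X_{ij}|\,|(X-Z)_{ij}|$. Since $|X_{ij}| \ge 1/(k-1)$ for all $i \ne j$, summing gives
\[X \cdot (X-Z) \;=\; \sum_{i \ne j} |X_{ij}|\,|(X-Z)_{ij}| \;\ge\; \frac{1}{k-1}\,\|X - Z\|_1,\]
where $\|\cdot\|_1$ denotes the entrywise $\ell_1$ norm.

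The matching upper bound on the perturbation uses the $\ell_\infty$/$\ell_1$ dual pairing. Since each entry of $E$ is bounded in absolute value by the Frobenius norm $\|E\|$,
\[|E \cdot (X-Z)| \;\le\; \Bigl(\max_{i,j} |E_{ij}|\Bigr)\,\|X-Z\|_1 \;\le\; \|E\|\,\|X - Z\|_1 \;<\; \frac{1}{k-1}\,\|X-Z\|_1.\]
Combining the two displays yields $Y \cdot (X-Z) \ge 0$, as required.

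The thing to get right — and really the only obstacle — is the choice of dual norms. A direct Cauchy--Schwarz bound $|E \cdot (X-Z)| \le \|E\|\,\|X-Z\|$ is too weak, since $\|X-Z\|^2$ scales like $X \cdot (X-Z)$ as $Z \to X$, so the perturbation would dominate on any fixed-radius ball around $X$. Pairing $\max_{i,j}|E_{ij}|$ against $\|X-Z\|_1$ gives matching linear scaling, and the threshold $1/(k-1)$ appears precisely as the minimum off-diagonal magnitude of a $k$-partition matrix.
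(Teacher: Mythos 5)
Your proof is correct and rests on the same central observation as the paper's: the off-diagonal entries of a $k$-partition matrix $X$ sit at the extreme values $1$ and $-1/(k-1)$ permitted in $\Lnk$, so every $(X-Z)_{ij}$ shares the sign of $X_{ij}$, and a Frobenius ball of radius $1/(k-1)$ keeps the perturbation too small entrywise to overturn this. The paper simply notes that $Y$ inherits the sign pattern of $X$ and concludes termwise that $Y \cdot (X-Z) \ge 0$; your $Y = X + E$ decomposition with the $\ell_1/\ell_\infty$ pairing makes the same estimate quantitative, but it is the same argument in a different wrapper.
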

\begin{proof}
  If $||Y-X|| < 1/(k-1)$ then
  $|(Y-X)_{i,j}| < 1/(k-1)$ for all $\{i,j\} \subseteq [n]$.  
  Since $X_{i,j}$ is
  either $1$ or $-1/(k-1)$ we can see that $Y$ has the same
  sign pattern as $X$.
  If $Z \in \Lnk$ all of the entries in $Z$ are between $-1/(k-1)$ and
  $1$.  Therefore $Y \cdot X \ge Y \cdot Z$.
\end{proof}

The next proposition shows that $k$-partition matrices are the only
matrices in $\Lnk$ where every entry is either $-1/(k-1)$ or $1$.

\begin{proposition}
  If $X \in \Lnk$ and $X_{i,j} \in \{ -1/(k-1),1\}$ for all $\{i,j\}
  \subseteq [n]$ then $X \in \Qnk$.
  \label{prop:entry}
\end{proposition}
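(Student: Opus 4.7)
The plan is to use the positive semidefiniteness of $X$ to interpret its entries as inner products, extract an equivalence relation from the $1$'s, and then control the number of equivalence classes using the $-1/(k-1)$ lower bound.

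Since $X \in \Ln$, we may take a Cholesky-type factorization $X = VV^T$ so that $X_{i,j} = y_i \cdot y_j$ for unit vectors $y_1,\ldots,y_n$ in $\reals^n$. Define $i \sim j$ iff $X_{i,j} = 1$. Because the $y_i$ are unit vectors, $y_i \cdot y_j = 1$ forces $y_i = y_j$, which makes $\sim$ an equivalence relation (symmetry is obvious from $X = X^T$; transitivity follows immediately from the vector equality). Let $A_1,\ldots,A_m$ be the resulting equivalence classes and let $v_r$ denote the common value of $y_i$ for $i \in A_r$. For $i \in A_r$ and $j \in A_s$ with $r \neq s$, the hypothesis forces $X_{i,j} = -1/(k-1)$, so the $v_r$ are $m$ unit vectors with pairwise inner product exactly $-1/(k-1)$.

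The next step is to show $m \le k$. The Gram matrix of $v_1,\ldots,v_m$ equals $(1-\alpha)I_m + \alpha J_m$ with $\alpha = -1/(k-1)$, where $J_m$ is the $m \times m$ all-ones matrix. Its eigenvalues are $1-\alpha > 0$ (multiplicity $m-1$) and $1+(m-1)\alpha = (k-m)/(k-1)$. Since this Gram matrix is positive semidefinite, we need $(k-m)/(k-1) \ge 0$, hence $m \le k$.

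Finally, I realize $X$ as a Gram matrix of vectors drawn from the simplex vertices $\{a_1,\ldots,a_k\}$. Choose any injection $\pi : \{1,\ldots,m\} \to \{1,\ldots,k\}$ and set $y_i' = a_{\pi(r)}$ for $i \in A_r$. Then $y_i' \cdot y_j' = 1$ if $i,j$ lie in the same class and $-1/(k-1)$ otherwise, which matches $X_{i,j}$ exactly. Thus $X$ is the Gram matrix of $(y_1',\ldots,y_n')$ with each $y_i' \in \{a_1,\ldots,a_k\}$, so $X \in \Qnk$.

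The only substantive step is the counting bound $m \le k$, and even that reduces to a one-line eigenvalue computation; everything else is a direct translation between inner-product conditions and set membership.
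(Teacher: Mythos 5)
Your proof is correct and follows the same route as the paper: read off an equivalence relation from the entries equal to $1$, note that representatives of distinct classes are unit vectors with pairwise inner product $-1/(k-1)$, and bound the number of classes by $k$. The only difference is that where the paper cites Lemma~4 of Frieze and Jerrum for the bound $m \le k$, you prove it directly via the eigenvalues of $(1-\alpha)I_m + \alpha J_m$, which makes the argument self-contained but is the same idea.
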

\begin{proof}
Suppose $X_{i,j} \in \{-1/(k-1),1\}$ for all $\{i,j\} \subseteq [n]$.
Since $X$ is a Gram matrix we know that $X_{i,j} = 1$ defines an equivalence 
relation on $[n]$.  Let $(A_1,\ldots,A_l)$ be the equivalence classes of 
this relationship.   If $i \in A_r$ and $j \in A_s$ with $r \neq s$ then
$X_{i,j} = -1/(k-1)$.  This means we have $l$ unit vectors with the dot product
between each pair equal to $-1/(k-1)$.  
Lemma 4 in \cite{Frieze} implies that $l \le k$.  We conclude $X \in \Qnk$.
\end{proof}

Let $$T(M) = \argmax_{X \in \Ln} M \cdot X.$$  
Note that the $\argmax$ in the definition of $T$ may not be unique.  In
this case $T(M)$ is set valued.  When we write $X = T(M)$ we allow $X$ to
be \emph{any} element of $T(M)$.

We will use the following
lemma from \cite{iter}.

\begin{lemma}
  \label{lem:local}
  Let $M \in \Sn$ and $X=T(M)$.  
  
  Suppose $X$ is the Gram matrix of $n$ unit vectors $(v_1,\ldots,v_n)$.  Then,
  \begin{itemize}
  \item[(a)]There exists real values $\alpha_i$ such that 
  $$\sum_{j \neq i} M_{i,j} v_j = \alpha_i v_i.$$
  \item[(b)]The vectors $(v_1,\ldots,v_n)$ are linearly dependent and $\rank(X) < n$.
  \item[(c)] There exists a diagonal matrix $D$ such that,
    $$MX = DX.$$
  \end{itemize}
\end{lemma}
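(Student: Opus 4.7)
The plan is to derive all three claims from the first-order optimality conditions for the SDP $\max_{X \in \Ln} M \cdot X$, working through the Cholesky factorization $X = V^T V$ where $V$ has columns $v_1,\ldots,v_n$. Under this reparametrization the PSD constraint becomes automatic and the diagonal constraints $X_{i,i}=1$ translate into $||v_i||^2 = 1$, so the feasible set is just the product of $n$ unit spheres.

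For parts (a) and (c), I would form the Lagrangian $L = \sum_{i,j} M_{i,j}\, v_i \cdot v_j - \sum_i \mu_i(||v_i||^2-1)$ and set $\nabla_{v_i}L = 0$. Using symmetry of $M$, this yields $\sum_j M_{i,j} v_j = \mu_i v_i$; pulling the $M_{i,i}v_i$ term to the right-hand side and relabeling $\alpha_i = \mu_i - M_{i,i}$ gives (a). For (c) I would repackage the $n$ equations $\sum_j M_{i,j} v_j = \mu_i v_i$ as an identity between the $i$-th columns of $VM$ and $VD$, where $D = \mathrm{diag}(\mu_1,\ldots,\mu_n)$, to obtain $VM = VD$; transposing and right-multiplying by $V$ produces $MV^TV = DV^TV$, i.e.\ $MX = DX$.

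For (b), I would argue from the geometry of $\Ln$: if $\rank(X) = n$ then $X$ is strictly positive definite and hence lies in the interior of the PSD cone, so $X + \varepsilon E$ remains feasible for any symmetric $E$ with vanishing diagonal and all sufficiently small $|\varepsilon|$. Maximality of $M \cdot X$ under such perturbations forces $M \cdot E = 0$ for every such $E$, which is possible only if $M_{i,j} = 0$ for all $i \neq j$. Contrapositively, whenever $M$ has any off-diagonal entry, $\rank(X) < n$, which is equivalent to linear dependence of the $v_i$'s.

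The main obstacle is really just the set-valued character of $T(M)$: when $M$ happens to be diagonal every $X \in \Ln$ is optimal, and the rank bound fails for a full-rank choice such as $X = I$. The cleanest way to handle this is to invoke the paper's convention that $X = T(M)$ may denote any element of the argmax, selecting a rank-deficient representative (for example the all-ones matrix) whenever $M$ is diagonal; parts (a) and (c) then remain valid for this choice with $\alpha_i = 0$, $D = M$.
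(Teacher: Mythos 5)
The paper does not actually prove this lemma; it is imported verbatim from the earlier work \cite{iter} with the remark ``we will use the following lemma from \cite{iter}.'' So there is no in-paper proof to compare against, and your attempt has to be judged on its own. That said, the Lagrangian argument you give is the natural one and is correct. Reparametrizing by a Cholesky factor $X = V^T V$ turns the feasible region into a product of unit spheres, LICQ holds because the constraint gradients $(0,\dots,2v_i,\dots,0)$ are independent whenever all $v_i \neq 0$ (which they are, being unit vectors), and since the chosen $V$ is a \emph{global} maximizer of the reparametrized problem, the stationarity condition $\sum_j M_{i,j} v_j = \mu_i v_i$ must hold. Parts (a) and (c) follow exactly as you describe, and your interior-perturbation argument for (b) is clean.

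The one genuine wrinkle is the diagonal-$M$ case, and you are right to flag it: under the paper's stated convention that $X = T(M)$ may denote \emph{any} element of the argmax, the pair $M = I$, $X = I$ is a literal counterexample to part (b), so the lemma needs an implicit nondegeneracy hypothesis (or a convention restricting the choice of representative) to be true as written. Your proposed fix---choosing a rank-deficient representative when $M$ is diagonal---repairs the proof but not the statement, since the statement quantifies over all maximizers. This is not a defect of your argument; it is a latent imprecision in the cited lemma. It does not affect how the lemma is used later in the paper (the proof of the vertex theorem only invokes part (a), with a perturbed matrix $M + J$ whose $(i,j)$ entry is explicitly nonzero), but it is worth stating explicitly that (b) holds under the additional hypothesis that $M$ is not diagonal, which is what your perturbation argument actually establishes.
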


Now we are ready to prove the main result of this section, showing
that the vertices of $\Lnk$ are in fact the integer solutions to the
Max $k$-Cut problem.

\begin{theorem}
The vertices of $\Lnk$ are the $k$-partition matrices.
\end{theorem}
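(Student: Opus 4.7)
I would prove the theorem via the two preceding propositions, establishing the inclusions $\Qnk \subseteq \mathrm{vert}(\Lnk)$ and $\mathrm{vert}(\Lnk) \subseteq \Qnk$ separately. The first is immediate from Proposition \ref{prop:attract}: for any $X \in \Qnk$, the open Frobenius ball of radius $1/(k-1)$ around $X$ sits inside $N(\Lnk, X)$, so this normal cone is full-dimensional in $\Sn$, and $X$ is a vertex by the paper's definition.

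For the reverse inclusion I would argue by contrapositive. Given $X \in \Lnk \setminus \Qnk$, Proposition \ref{prop:entry} supplies an off-diagonal entry $X_{ab} \in (-1/(k-1),1)$, and my goal is to construct a nonzero symmetric $H$ with $X + tH \in \Lnk$ for all $t$ in some open interval around $0$; such a line segment through $X$ forces $X$ to be non-extreme, hence not a vertex. Writing $X = VV^T$ with $V$ of size $n \times r$ for $r = \rank(X)$ and unit rows $v_1,\ldots,v_n$, I would look for $H$ of the form $H = VBV^T$ with $B$ a symmetric $r \times r$ matrix; this form automatically gives $\ker(X) \subseteq \ker(H)$, which is exactly the bidirectional tangent condition $X \pm tH \succeq 0$ for small $|t|$. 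Preservation of the diagonal constraints and of the tight orthant set $S = \{(i,j) : X_{ij} = -1/(k-1)\}$ becomes the linear system $v_i^T B v_i = 0$ for all $i$ and $v_i^T B v_j = 0$ for all $(i,j) \in S$, and the goal is a nonzero $B$ satisfying this system with $v_a^T B v_b \neq 0$.

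The technical core is producing such a $B$. I would combine the KKT characterization of a vertex with Lemma \ref{lem:local}: a vertex $X$ admits some $M \in \Sn$ uniquely maximizing $M \cdot Y$ over $\Lnk$, and SDP duality then supplies $\Lambda \succeq 0$ with $\Lambda X = 0$, a diagonal $D$, and a symmetric $U \ge 0$ supported on $S$ with $M = D - \Lambda - U$, making $X$ also a maximizer of $(M+U)\cdot Y$ over $\Ln$. Lemma \ref{lem:local} then yields $(M+U)X = DX$ together with the vector equations $\sum_{j \neq i}(M+U)_{ij}\, v_j = \alpha_i v_i$. The main obstacle, and where I expect to spend the most effort, is using Lemma 4 of \cite{Frieze} (the equiangular bound that drives Proposition \ref{prop:entry}) to turn a free off-diagonal entry $X_{ab} \in (-1/(k-1),1)$ into genuine slack among the quadratic forms $\{v_iv_i^T\} \cup \{v_iv_j^T + v_jv_i^T : (i,j) \in S\}$, thereby producing a nontrivial $B$ with $v_a^T B v_b \neq 0$. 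Making this dependency/slack estimate precise is the technical heart of the argument.
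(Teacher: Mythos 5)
The forward inclusion ($\Qnk \subseteq \mathrm{vert}(\Lnk)$) matches the paper exactly. The reverse inclusion, however, takes a route that cannot work, and the obstacle is not the ``technical heart'' you flag but a more basic logical mismatch.

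Your plan for $X \in \Lnk \setminus \Qnk$ is to exhibit a nonzero symmetric $H$ with $X \pm tH \in \Lnk$ for small $t$, concluding $X$ is non-extreme, hence not a vertex. The implication ``non-extreme $\Rightarrow$ not a vertex'' is fine, but it is strict: the paper's notion of vertex (full-dimensional normal cone) is much stronger than extremeness for non-polyhedral bodies, and $\Lnk$ has many extreme points that are not $k$-partition matrices. Already for $n=3$, $k=2$ (so $\Lnk = \Ln$, since $X_{ij} \ge -1$ is automatic), take $X = VV^T$ of rank $2$ with rows $v_i = (\cos\theta_i, \sin\theta_i)$ on the boundary of $\Ln$. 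Your candidate perturbations $H = VBV^T$ with $B$ a symmetric $2\times 2$ matrix give three free parameters, and the three diagonal constraints $v_i^T B v_i = 0$ amount to a generic $3\times 3$ linear system with only the trivial solution $B=0$. Such $X$ is therefore an extreme point of $\Ln$, yet it is neither a vertex nor in $\mathcal{Q}_{3,2}$. Your construction would come up empty on this $X$, even though the theorem requires you to show it is not a vertex. No amount of effort on the ``dependency/slack estimate'' you mention can fix this, because the object you seek genuinely does not exist.

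The paper sidesteps extremeness entirely and argues directly about the normal cone. It decomposes $N(\Lnk,X) = N(\Ln,X) + N(\mathcal{O}_{n,k},X)$, picks the elementary symmetric direction $J = e_ie_j^T + e_je_i^T$ for a free entry $X_{ij} \notin \{-1/(k-1),1\}$, and shows (i) $J$ is orthogonal to $N(\mathcal{O}_{n,k},X)$, since the orthant normal cone is generated by $e^{r,s}$ only at tight entries, and (ii) $J$ is not in the linear span of $N(\Ln,X)$, by a contradiction using Lemma~\ref{lem:local}: if $M$ and $M+J$ both lie in $N(\Ln,X)$, the eigenvector relations force $v_i \propto v_j$, contradicting $X_{ij}\notin\{-1,1\}$. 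This shows $J \notin \mathrm{span}\,N(\Lnk,X)$, so the normal cone is degenerate and $X$ is not a vertex --- a statement that is true for rank-deficient extreme points as well. If you want to keep your framework, the fix is to replace the two-sided perturbation $X \pm tH$ (which tests extremeness) with a direct analysis of $\mathrm{span}\,N(\Lnk,X)$ as the paper does; the Lemma~\ref{lem:local}/KKT machinery you invoke in your third paragraph is exactly the right tool, but it must be aimed at the normal cone rather than at constructing $H$.
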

\begin{proof}
Suppose $X \in \Qnk$.  Proposition~\ref{prop:attract} implies 
$N(\Lnk,X)$ is full-dimensional.

Now suppose $X \not\in \Qnk$.  Proposition~\ref{prop:entry} implies
that $X$ must have at least one entry $X_{i,j} \not\in \{-1/(k-1),1\}$ Since $\Lnk
\subseteq \Ln$ we know $X \in \Ln$.  Note that $X_{i,j} \not\in
\{-1,1\}$.  Let $J$ be a matrix with $J_{i,j} = J_{j,i} = 1$ and
$J_{k,l} = 0$ for $\{k,l\} \neq \{i,j\}$.

Let ${\cal O}_{n,k} = \{X \,|\, X_{i,j} \ge -1/(k-1)\}$.  Since $\Lnk
= \Ln \cap {\cal O}_{n,k}$ we have
$$N(\Lnk, X) = \{v + u ,|\, v \in N(\Ln,X), u \in N({\cal
  O}_{n,k},X)\}.$$ We consider $N(\Ln,X)$ and $N({\cal O}_{n,k},X)$
separately.  We show $J$ is not in the linear span of $N(\Ln,X)$ and
$J$ is orthogonal to $N({\cal O}_{n,k},X)$.  This implies $J$ is not
in the linear span of $N(\Lnk, X)$ and therefore $N(\Lnk, X)$ is not
full-dimensional.

For the sake of contradiction suppose $J$ is in the linear span of
$N(\Ln,X)$.  Then $J = R - M$ with $R$ and $M$ both in $N(\Ln,X)$.
This implies $M+J = R$ and $M+J \in N(\Ln,X)$.  Since $M \in N(\Ln,
X)$ we have $X = T(M)$ and by the lemma above $X$ is the Gram matrix
of $(v_1,\ldots,v_n)$ where,
$$v_i \propto \sum_{k \neq i} M_{i,k} v_k.$$
Since $M+J \in N(\Ln,X)$ we have $X=T(M+J)$ and the lemma also implies,
$$v_i \propto \sum_{k \neq i} (M_{i,k} + J_{i,k}) v_k
\Rightarrow v_i \propto v_j + \sum_{k \neq i} M_{i,k} v_k.$$
Therefore $$v_i \propto v_j.$$ Since $X_{i,j} \not\in \{-1,1\}$ the
unit vectors $v_i$ and $v_j$ can not be proportional and we have a
contradiction.  Therefore $J$ is not in the linear span of $N(\Ln,X)$.

Now note
$$N({\cal O}_{n,k},X) = \cone(\{e^{r,s} \,|\, X_{r,s} = -1/(k-1)\}),$$
where $e^{r,s}$ is the matrix that is 0 everywhere except in the
$(r,s)$ position which has value $1$. The dot product $J \cdot e^{r,s}$
equals $0$ for all $(r,s)$ with $X_{r,s} = -1/(k-1)$.  Therefore
$N({\cal O}_{n,k},X)$ is orthogonal to $J$.
\end{proof}

\section{Iterated linear optimization and rounding}
\label{sec:iter}

A key step in solving a combinatorial optimization problem via a
convex relaxation involves \emph{rounding} a solution of the convex
relaxation to a solution of the original combinatorial optimization
problem.  In the case of Max $k$-Cut this involves mapping a solution
$X \in \Lnk$ to a $k$-partition matrix $Y \in \Qnk$.

Recall that $X \in \Lnk$ is the Gram matrix of $n$ unit vectors 
$(v_1,\ldots,v_n)$.  
The problem of rounding $X$ can be seen
as a new clustering problem, where we would like to partition
$(v_1,\ldots,v_n)$ into $k$ sets.  To solve this clustering problem we look for
$Y \in \Qnk$ that is close to $X$.  Relaxing this 
problem to $\Lnk$ we obtain a new SDP with solution $X' \in \Lnk$.
Our rounding method involves repeating this process multiple times.
The process is guaranteed to converge and the underlying unit vectors
become more clearly clustered with each step.

\subsection{Iterated linear optimization}

Let $\Delta \subset \reals^n$ be a compact convex subset containing
the origin.  Let $T(x)$ be the map defined by linear optimization over
$\Delta$,
$$T(x) = \argmax_{y \in \Delta} x \cdot y.$$ In \cite{iter} it was
shown that 
fixed point iteration with $T$ always converges to a fixed
point of $T$.  Furthermore, when $\Delta$ is the elliptope, $T(X)$
solves a relaxation to the closest vertex problem.  Here we derive a
similar result for the $k$-way elliptope, and show that iterated
linear optimization in $\Lnk$ can be used to round a solution to the
SDP relaxation for Max $k$-Cut.

\subsection{Deterministic rounding in $\Lnk$}

Let $X \in \Lnk$ be a solution to the SDP relaxation of a Max
$k$-Cut problem.  If $X \in \Qnk$ then $X$ defines $k$-partition.
Otherwise we look for
$Y \in \Qnk$ that is closest to $X$.  By relaxing this 
problem we obtain a new SDP.  Solving
the new SDP leads to a new solution $Y \in \Lnk$.  If $Y \in \Qnk$
then $Y$ is the closest $k$-partition matrix to $X$.  Otherwise we
recursively look for a matrix $Z \in \Qnk$ that is closest to $Y$.
The approach leads to a fixed point iteration process with a map $T'$
that optimizes a linear function over $\Lnk$.

Let $a = (1-k/2)/(k-1)$ and $A$ 
be the matrix where every entry equals $a$.  If $Y \in
\Qnk$ then all entries in $Y$ are in $\{-1/(k-1),1\}$ and all entries in $Y+A$
are in
$$\left\{-\frac{k/2}{k-1}, \frac{k/2}{k-1}\right\}.$$
Therefore $(Y+A) \cdot (Y+A)$ is constant.

Let $Y \in \Qnk$ and consider the following expansion,
\begin{align*}
||X-Y||^2 & = ||(X+A)-(Y+A)||^2 \\
& = (X+A) \cdot (X+A) + (Y+A) \cdot
(Y+A) - 2(X+A) \cdot (Y+A).
\end{align*}
Note that $(X+A) \cdot (X+A)$ does not depend on $Y$ and $(Y+A) \cdot (Y+A)$ is constant.
Therefore the closest $k$-partition matrix to $X$ is,
\begin{align*}
Y = \argmin_{Y \in \Qnk} ||X-Y||^2 
& = \argmax_{Y \in \Qnk}\, (X + A) \cdot (Y + A), \\
& = \argmax_{Y \in \Qnk}\, (X + A) \cdot Y.
\end{align*}
Relaxing this problem to $\Lnk$ we obtain $Y = T(X+A)$ where $T$ is
defined over $\Delta = \Lnk$.

Let $T'(X) = T(X+A)$.  That is,
$$T'(X) = \argmax_{Y \in \Lnk}\, (X + A) \cdot Y.$$

\paragraph{Fixed point iteration} 
Our rounding method involves fixed point iteration with $T'$.  That
is, we generate a sequence $\{X_t\}$ where $X_0 = X$ and,
$$X_{t+1} = T'(X_t).$$

Note that iteration with $T'$ is equivalent to iteration with $T$ in
$\Delta = \Lnk+A$.

Figure~\ref{fig:iter} shows the result of the fixed point iteration
process in $\Lnk$ for different values of $k$.  In each case we start
from the result of the SDP relaxation of Max $k$-Cut for a graph with
$n=50$ vertices and random weights.  In each example fixed point
iteration with $T'$ converges to a $k$-partition matrix after a small
number of iterations.

Figure~\ref{fig:example} in
Section~\ref{sec:intro} shows an example of the sequence of solutions
generated by $T'$ for a geometric clustering problem.

We say that a fixed point $x$ of a map $f:\Delta \rightarrow \Delta$
is \emph{attractive} if $\exists \epsilon > 0$ such that $||x-x_0|| <
\epsilon$ implies that iteration with $f$ starting at $x_0$ converges
to $x$.  

In \cite{iter} we characterized all of the fixed points of $T'$ when $k=2$ (in this case $T'=T$) and showed the attractive fixed points are exactly the $k$-partition matrices.  Here we consider the case when $k \ge 2$.

\begin{proposition}
The $k$-partition matrices are attractive fixed points of $T'$.
\end{proposition}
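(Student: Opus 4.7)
The plan is to prove both the fixed-point claim and attractiveness by a single componentwise argument paralleling the proof of Proposition~\ref{prop:attract}. The first step is to compute the entries of $X+A$ for $X \in \Qnk$ with $a=(1-k/2)/(k-1)$: they equal $k/(2(k-1))>0$ wherever $X_{ij}=1$ and $-k/(2(k-1))<0$ wherever $X_{ij}=-1/(k-1)$, so $X+A$ exhibits the same kind of strict sign pattern exploited in the proof of Proposition~\ref{prop:attract}.

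For the fixed-point part I would invoke the entrywise bounds on $\Lnk$: every $Y\in \Lnk$ satisfies $Y_{ii}=1$ and $-1/(k-1)\le Y_{ij}\le 1$ off-diagonal, the upper bound coming from Cauchy--Schwarz applied to the underlying unit vectors. Maximizing $(X+A)\cdot Y$ entrywise then forces $Y_{ij}=1$ where the coefficient is positive and $Y_{ij}=-1/(k-1)$ where it is negative, and the only matrix in $\Lnk$ realizing this assignment is $X$ itself. Since $X\in \Qnk\subseteq \Lnk$, this gives $T'(X)=\{X\}$.

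For attractiveness I would take $\epsilon=k/(2(k-1))$. If $\|X_0-X\|<\epsilon$, then $|(X_0-X)_{ij}|<k/(2(k-1))$ at every coordinate, so each $(X_0+A)_{ij}$ retains the strict sign of $(X+A)_{ij}$. Repeating the componentwise argument with $X_0+A$ in place of $X+A$ yields $T'(X_0)=\{X\}$, and the iteration collapses to $X$ in a single step.

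The main obstacle is uniqueness of the maximizer of the linear objective: because $T'$ is set-valued, I need $T'(X_0)$ to equal the singleton $\{X\}$ rather than merely to contain it. The strict inequality $\|X_0-X\|<\epsilon$ is what makes this work; it ensures every coefficient of $X_0+A$ is strictly nonzero, so the entrywise maximum over matrices satisfying $Y_{ii}=1$ and $-1/(k-1)\le Y_{ij}\le 1$ is forced to a unique extreme point, which is $X$, and this extreme point already lies in $\Lnk$.
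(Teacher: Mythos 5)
Your proposal is correct and follows essentially the same route as the paper: both compute that $X+A$ has entries $\pm\,k/(2(k-1))$, observe that any $X_0$ within Frobenius distance $k/(2(k-1))$ of $X$ gives $X_0+A$ the same strict sign pattern, and then use the entrywise bounds $-1/(k-1)\le Y_{ij}\le 1$ on $\Lnk$ to force the unique maximizer of $(X_0+A)\cdot Y$ to be $X$ itself, so the iteration lands on $X$ in one step. Your explicit handling of the set-valued nature of $T'$ (uniqueness of the argmax via strict nonzero coefficients) is a welcome clarification of a point the paper's proof leaves somewhat implicit, but the underlying argument is the same.
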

\begin{proof}
  Let $X \in \Qnk$ be a $k$-partition matrix.

  Let $Y \in \Lnk$ with $||Y-X|| < (k/2)/(k-1)$.  Then
  $|(Y+A)_{i,j}-(X+A)_{i,j}| = |(Y-X)_{i,j}| < (k/2)/(k-1)$.  Since
  every entry in $X+A$ is either $-(k/2)/(k-1)$ or $(k/2)/(k-1)$ we
  see that $Y+A$ has the same sign pattern of $X+A$.

  For $Z \in \Lnk$ we have $|(Z+A)_{i,j}| \le |(X+A)_{i,j}|$.
  Therefore $(Y+A) \cdot (Z+A) \le (Y+A) \cdot (X+A)$.  Moreover if $Z
  \neq X$ we have $(Y+A) \cdot (Z+A) < (Y+A) \cdot (X+A)$.  This
  implies $X=T'(Y)$ and $X$ is an attractive fixed point of $T'$.
\end{proof}

\begin{figure}
    \centering
    \subfigure[$k=2$]{\includegraphics[trim=0 125 75 130, clip, width=4in]{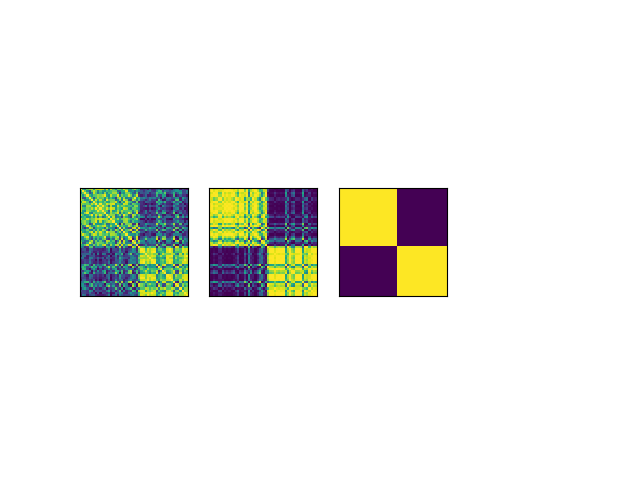}}    
    \subfigure[$k=5$]{\includegraphics[trim=20 140 65 130, clip, width=6in]{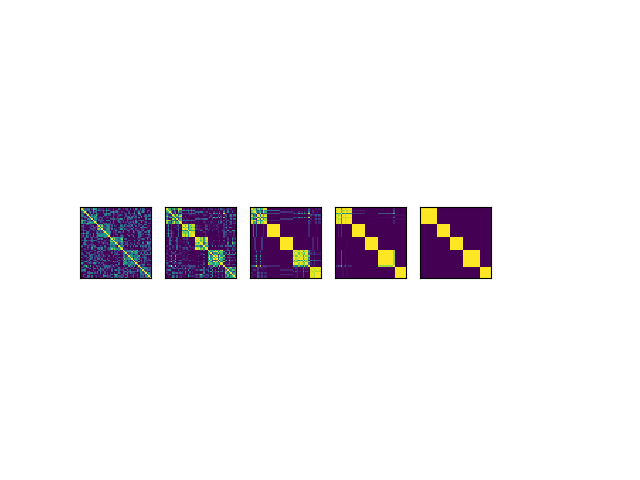}}    
    \subfigure[$k=10$]{\includegraphics[trim=20 140 65 130, clip, width=6in]{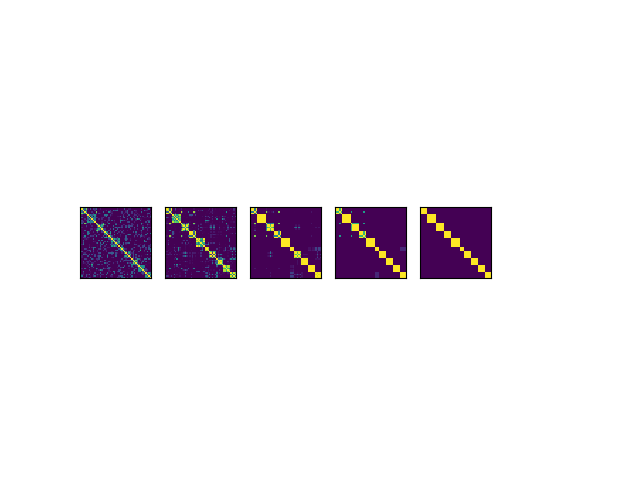}}
    \caption{Fixed point iteration with $T'$, starting from the
      solution of the SDP relaxation of Max $k$-Cut for a graph with
      50 vertices and random weights.  To visualize a matrix in $\Lnk$
      we show an $n \times n$ picture with a pixel for each entry in
      the matrix.  Bright yellow pixels indicate entries with high
      value, and dark blue pixels indicate entries with low value.  In
      each case we obtain a sequence of solutions that converge to a
      $k$-partition matrix.  The rows and columns in each example have
      been permuted so the final matrix is block diagonal to
      facilitate visualization.}
    \label{fig:iter}
\end{figure}

Define $f: \Lnk \rightarrow \mathbb{R}$ as,
$$f(X) = (X+A) \cdot (X+A) = \sum_{i=1}^n \sum_{j=1}^n (X_{i,j}+a)^2.$$  

Consider the representation of $X \in \Lnk$ as the Gram matrix of $n$
unit vectors $(v_1,\ldots,v_n)$ and recall that $X_{i,j} = v_i \cdot
v_j \ge -1/(k-1)$.

If $v_i \cdot v_j = 1$ (the vectors are as close as possible) then
$(X_{i,j}+a) = (k/2)/(k-1)$.  If $v_i \cdot v_j = -1/(k-1)$ (the
vectors are as far as possible) then $(X_{i,j}+a) =
-(k/2)/(k-1)$.  For any other choice $-(k/2)/(k-1) < (X_{i,j}+a) < (k/2)/(k-1)$.

Proposition~\ref{prop:entry} shows that $k$-partition matrices are
exactly the matrices in $\Lnk$ where every entry is in
$\{-1/(k-1),1\}$.  Therefore the matrices that maximize $f(X)$ are
the $k$-partition matrices.  We can see $f(X)$ as a measure of how
close $X$ is to a $k$-partition matrix.  We can also interpret $f(X)$
as a measure of how well clustered the vectors $(v_1,\ldots,v_n)$ are.

The convergence results from \cite{iter} imply the
sequence $\{X_t\}$ converges to a fixed point of $T'$.  
Additionally, the results from \cite{iter} imply that $f(X)$
increases in each iteration.  Therefore the vectors defining $X_t$ become
more clearly clustered in each iteration.

We have shown that matrices in $\Qnk$ are \emph{attractive} fixed
points of $T'$.  Although the map $T'$ has other fixed points, our
numerical experiments suggest that these are
the only attractive fixed points, and that fixed point iteration with $T'$
starting from a generic point in $\Lnk$ always converges to a
$k$-partition matrix.  

We have done several numerical experiments iterating $T'$ to round a solution of
the Max $k$-Cut relaxation.  In one setting we generated $50 \times
50$ random weight matrices $M$ with independent weights sampled from a
Gaussian distribution with mean 0 and standard deviation 1 (note that in this case the matrix $M$ may have negative weights).  In
another setting we generated $50 \times 50$ random weight matrices $M$
by sampling $50$ points in $\mathbb{R}^{10}$ independently from the
uniform distribution over $[0,1]^{10}$.  We then set $M_{ij} =
||x_i-x_j||^2$.  We ran 100 different trials with $k=5$ in each
setting.  Fixed point iteration with $T'$ starting from
the solution of the Max $k$-Cut relaxation always converged to a
$k$-partition matrix.  The number of iterations until convergence in
the first setting was between 3 and 10, with an average of 7.02.  The
number of iterations until convergence in the second setting was
between 3 and 4, with an average of 3.01.

\section{Clustering Experiments}
\label{sec:experiments}

In this section we illustrate the results of our clustering method on
several datasets.  The algorithms were implemented in Python and run
on a computer with an Intel i7 CPU @ 2.6 Ghz with 8GB of RAM.  We use
the cvxpy package for convex optimization together with the SCS
(splitting conic solver) package to solve SDPs.  The fixed point
iteration process we use for rounding involves solving a sequence of
SDPs identical to the Max $k$-Cut SDP but with a different objective.
For the examples below solving each SDP took 1 to 3 minutes.  The
fixed point iteration method converged after 1 to 5 iterations in each
case.

In each clustering experiment we have a dataset $D=\{x_1,\ldots,x_n\}$
with $x_i \in \reals^d$.  As discussed in
Section~\ref{sec:clustering} we cluster the data by defining a Max
$k$-Cut problem with weights $M_{i,j} = ||x_i-x_j||^2$.  With this
choice of weights the maximum weight partition should lead to compact and balanced 
clusters, where points within a single cluster are close together while points in different clusters are far from each other.  Section~\ref{sec:synt} illustrates
clustering results on synthetic data, while Section~\ref{sec:mnist} evaluates the results of
clustering a subset of the MNIST handwritten digits (\cite{MNIST}).

\subsection{Synthetic data}
\label{sec:synt}

We performed several experiments using synthetic data in $\reals^2$ to
facilitate the visualization of the results.
Figures~\ref{fig:test5},~\ref{fig:test4}, and~\ref{fig:test3}
illustrate the results of clustering a dataset of 200 points into 5,
10 and 20 clusters respectively.\footnote{We used a subset of the D-31
dataset from \cite{Veenman02} with 10 points from each of 20
clusters.}  In each case we compare the result obtained using fixed
point iteration for rounding the solution of the Max $k$-Cut
relaxation to the result of randomized rounding.  For the randomized
rounding method we repeat the rounding procedure 50 times and select
the partition with highest weight generated over all trials.

In all of the experiments we see that the weight of the partition
generated by the fixed point iteration method is higher than the
weight of the best partition generated by randomized rounding.  When
$k=5$ (Figure~\ref{fig:test5}) the results of the two methods are
similar, but the weight of the partition generated by fixed point
iteration is slightly better.  When $k=10$ and $k=20$
(Figures~\ref{fig:test4} and~\ref{fig:test3}) the results of
randomized rounding are significantly degraded while the results of
the fixed point iteration method remain very good.

Figure~\ref{fig:random} illustrates the partitions obtained in
different trials of randomized rounding for the case when $k=5$.  We
can see there is a lot of variance in the results and that the random
rounding method often leads to poor clusterings even when $k$ is
relatively small.  The result of our fixed point iteration method in
the same data is shown in Figure~\ref{fig:test5}(c).

\begin{figure}
\centering
  \subfigure[Input data]{\includegraphics[width=3in]{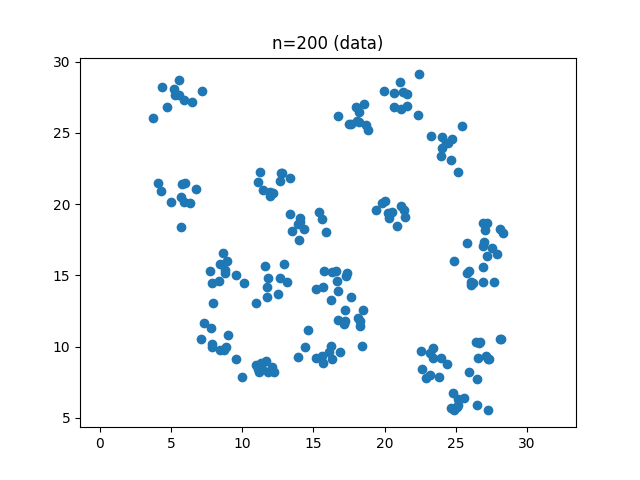}}
  
  \subfigure[Randomized (best of 50) $w(C) = 3543294$]{\includegraphics[width=3in]{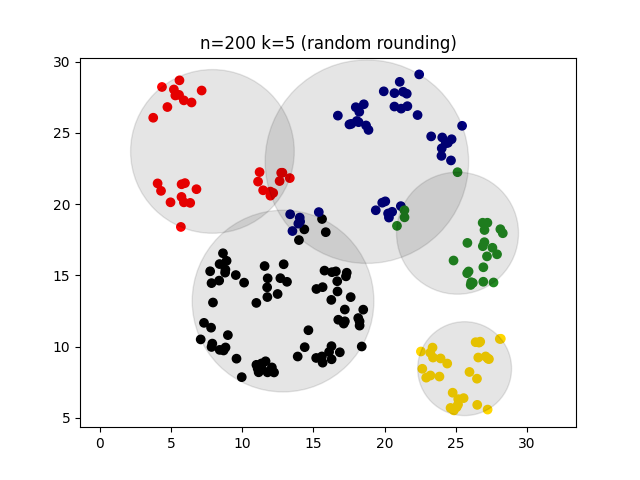}}
  \subfigure[Fixed point iteration $w(C) = 3589259$]{\includegraphics[width=3in]{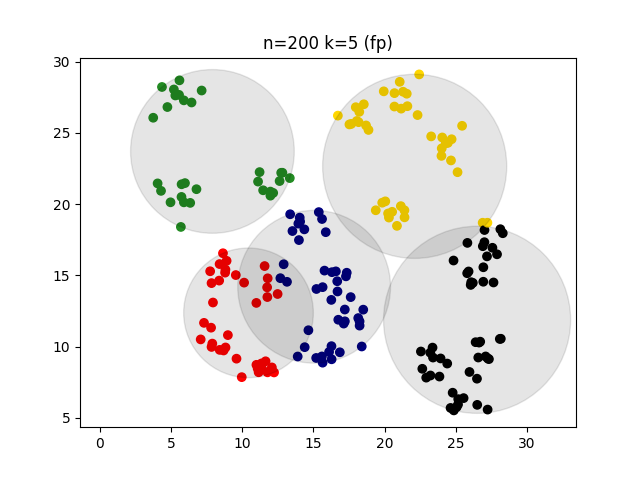}}
  \caption{Clustering 200 points with $k=5$.}
\label{fig:test5}
\end{figure}

\begin{figure}
\centering
  \subfigure[Input data]{\includegraphics[width=3in]{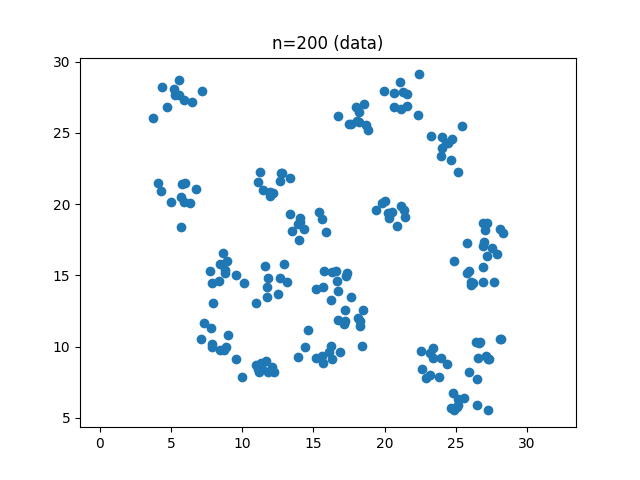}}
  
  \subfigure[Randomized (best of 50) $w(C) = 3587153$]{\includegraphics[width=3in]{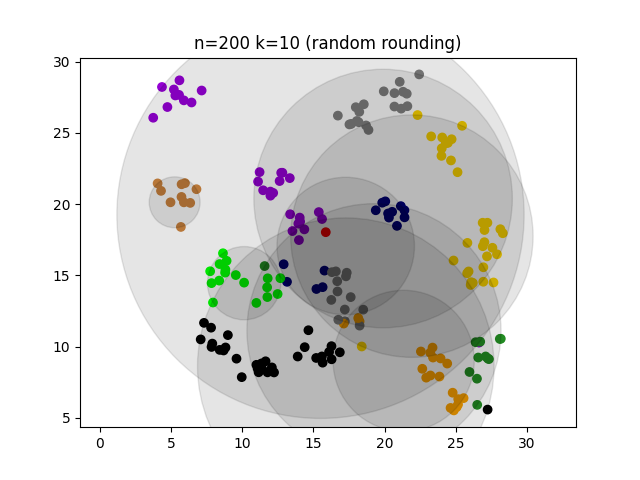}}
  \subfigure[Fixed point iteration $w(C) = 3701677$]{\includegraphics[width=3in]{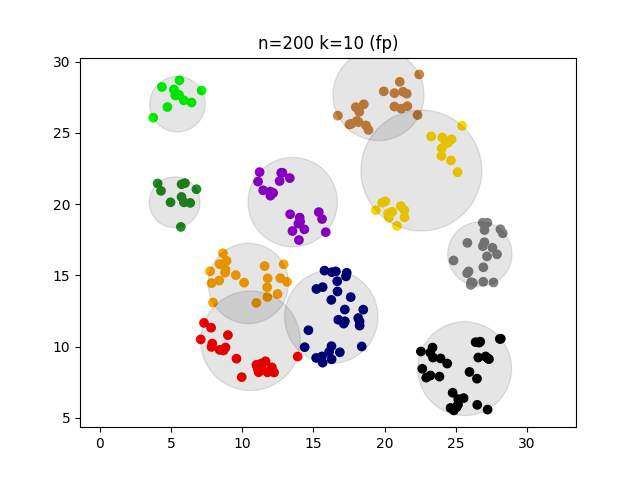}}
  \caption{Clustering 200 points with $k=10$.}
\label{fig:test4}
\end{figure}

\begin{figure}
\centering
  \subfigure[Input data]{\includegraphics[width=3in]{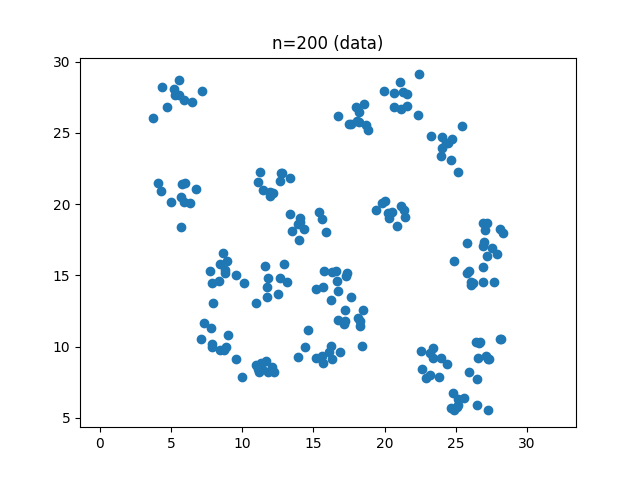}}
  
  \subfigure[Randomized (best of 50) $w(C) = 3658976$]{\includegraphics[width=3in]{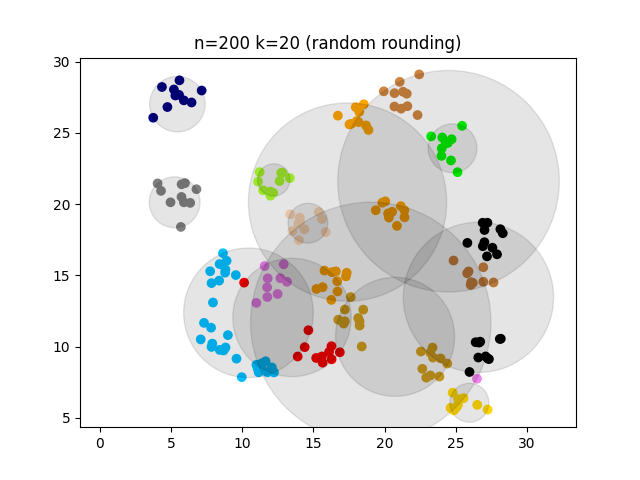}}
  \subfigure[Fixed point iteration $w(C) = 3722073$]{\includegraphics[width=3in]{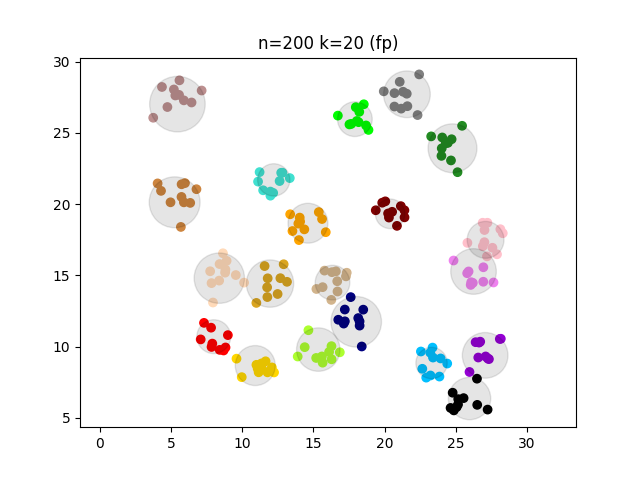}}
  \caption{Clustering 200 points with $k=20$.}
\label{fig:test3}
\end{figure}

\begin{figure}
\centering
  \subfigure[Trial 1]{\includegraphics[width=2.8in]{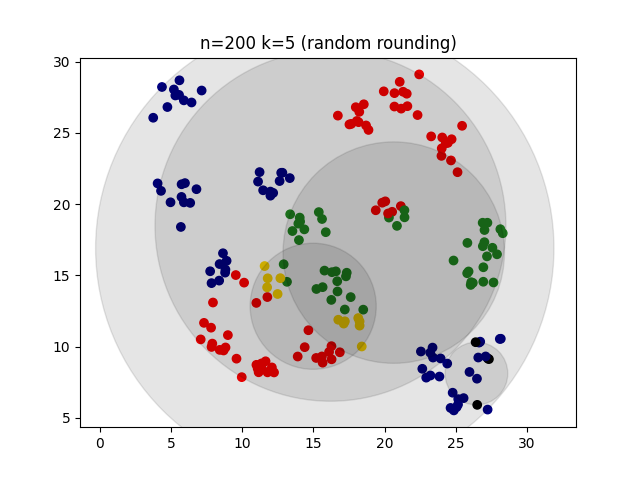}}
  \subfigure[Trial 2]{\includegraphics[width=2.8in]{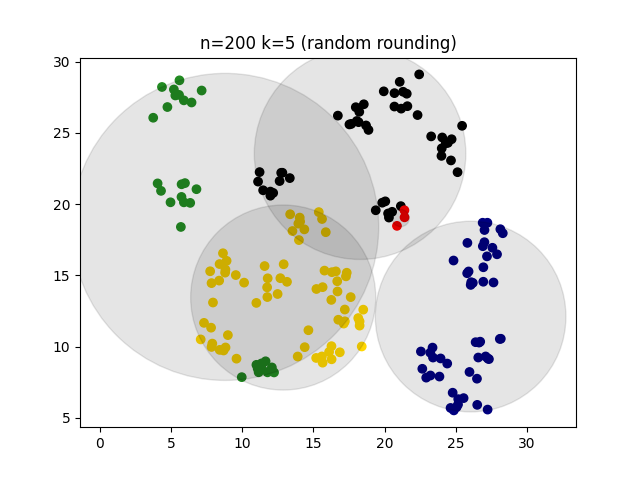}}
  
  \subfigure[Trial 3]{\includegraphics[width=2.8in]{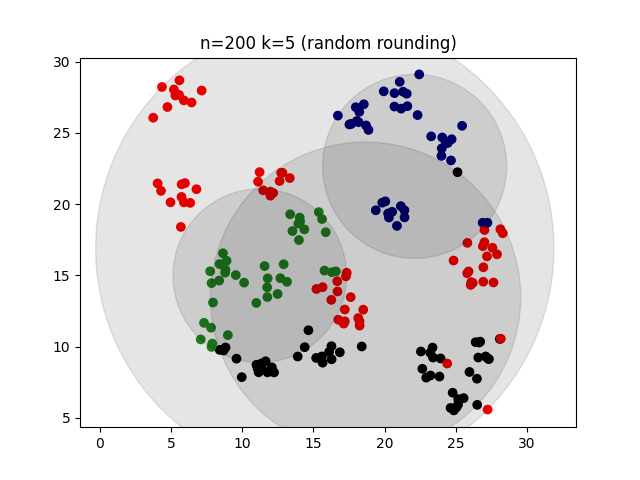}}
  \subfigure[Trial 4]{\includegraphics[width=2.8in]{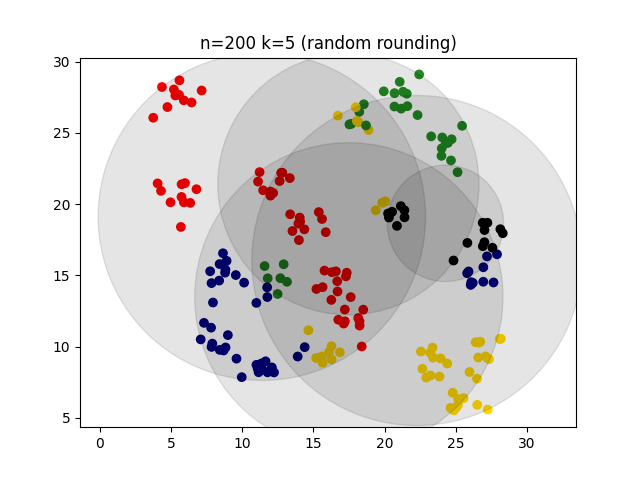}}
  
  \subfigure[Trial 5]{\includegraphics[width=2.8in]{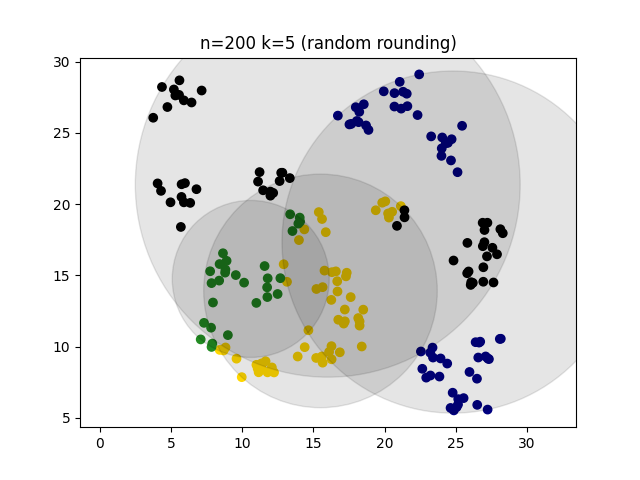}}
  \subfigure[Trial 6]{\includegraphics[width=2.8in]{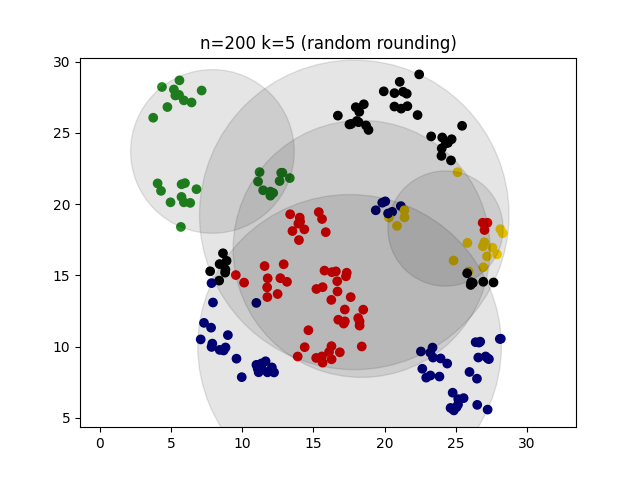}}  
  \caption{Clustering 200 points with $k=5$ and randomized rounding.  The result of the fixed point iteration method in the same data is shown in Figure~\ref{fig:test5}(c).}
\label{fig:random}
\end{figure}

Figure~\ref{fig:gaussian} shows an example where the input data has
160 points generated by sampling 20 points from 8 different Gaussian
distributions.  The Gaussian distributions have standard deviation
$\sigma=0.2$ and means arranged around a circle of radius 1.  In this
case there is a ground truth clustering where points are grouped
according to the Gaussian used to generate them.  The overlap between
the distributions is sufficiently high that it is impossible to
recover the ground truth clustering perfectly, but the result of the
fixed point iteration method is closely aligned with the ground truth.
The results of randomized rounding are not as good even when we select
the best of 50 trials of the randomized procedure.

We repeated the last experiment 10 times to quantify the difference
between the two rounding methods.  Each repetition was done with a new
dataset generated from the mixture of Gaussians.  
Table~\ref{tab:gaussians}(a) summarizes the minimum, maximum,
and mean value of the ratio $w(C)/w(D)$, where $C$ is the partition
produced by the fixed point iteration method and $D$ is the result of
the best of 50 trials of randomized rounding.  In these experiments 
the ratio $w(C)/W(D)$ was
always above one, showing that our fixed point iteration method 
always returns a partition with
higher weight.  We also compare the
resulting clusterings to the ground truth using the Rand index
(\cite{rand}).  

\begin{table}
\centering
\subtable[Partition weights]{\begin{tabular}{|l|c|}
\hline 
& $w(C)/w(D)$ \\
\hline
minimum & 1.005 \\
\hline
maximum & 1.026 \\
\hline
mean & 1.014 \\
\hline
\end{tabular}}
\subtable[Clustering accuracy]{\begin{tabular}{|l|c|}
\hline 
& Rand index \\
\hline 
fixed point & $0.972 \pm 0.006$ \\
\hline 
randomized rounding & $0.935 \pm 0.018$ \\
\hline
\end{tabular}}
\caption{Experiments with 10 different datasets generated by a mixture
  of Gaussians (see Figure~\ref{fig:gaussian}). (a) Minimum, maximum
  and mean value of $w(C)/w(D)$, where $C$ is the result of the fixed
  point iteration method and $D$ is the result of the best of 50
  trials of randomized rounding. (b) Mean and standard deviation of
  the Rand index for both methods.}
\label{tab:gaussians}
\end{table}

\begin{figure}
\centering
  \subfigure[Input data and ground truth clustering]{\includegraphics[width=3in]{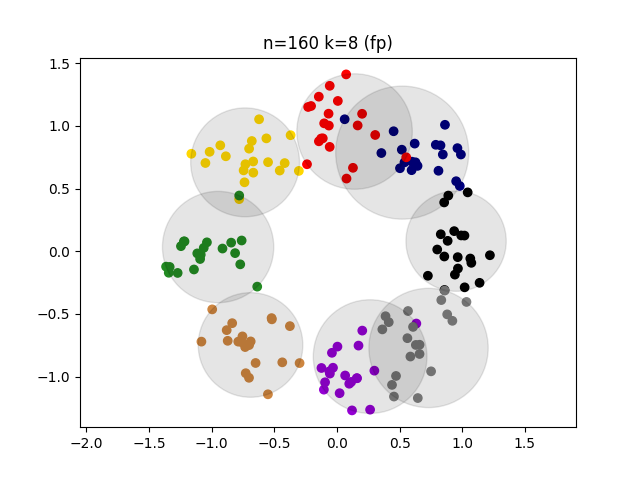}}
  
  \subfigure[Randomized (best of 50) $w(C) = 26837$]{\includegraphics[width=3in]{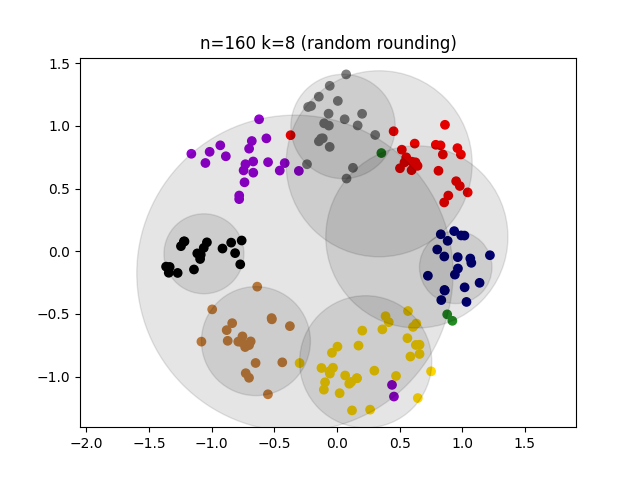}}
  \subfigure[Fixed point iteration $w(C) = 27158$]{\includegraphics[width=3in]{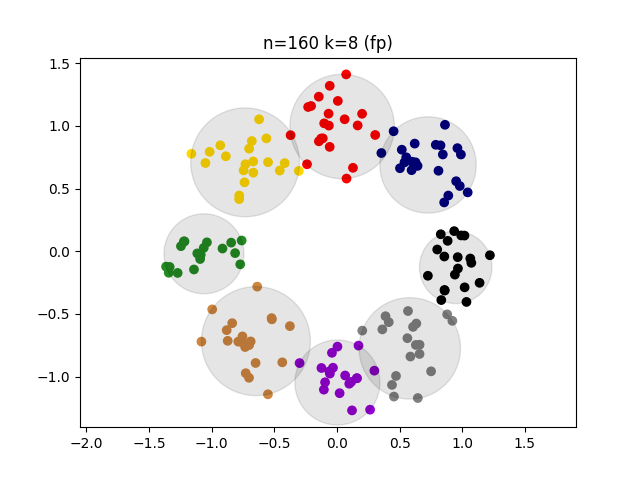}}
  \caption{Clustering 160 points sampled from 8 Gaussians  with $k=8$.}
\label{fig:gaussian}
\end{figure}

The Rand index is a number between 0 and 1 measuring
the agreement between two clusterings.  Let $A$ and $B$ be
clusterings.  Let $a$ be the number of pairs of elements that are in
the same cluster in both $A$ and $B$ while $b$ is the number of pairs
of elements that are in different clusters in both $A$ and $B$.  The
Rand index is,
$$R(A,B) = \frac{a+b}{\binom{n}{2}}.$$
We see in Table~\ref{tab:gaussians}(b) that our fixed point
rounding method consistently produces a clustering that
is more similar to the ground truth than the randomized rounding method.

\subsection{MNIST digits}
\label{sec:mnist}

In this section we evaluate the performance of our clustering method
on a subset of the MNIST handwritten digits dataset (\cite{MNIST}).

We performed multiple clustering trials with different subsets of the
MNIST training data.  In each trial we selected 20 random examples for
each of 5 digits (0, 1, 2, 3 and 4).  Each example was represented by
a 784 dimensional $0/1$ vector encoding a 28x28 binary image.
Figure~\ref{fig:mnist} shows the data from one of the trials.

\begin{figure}
    \centering
    \includegraphics[width=1.5in]{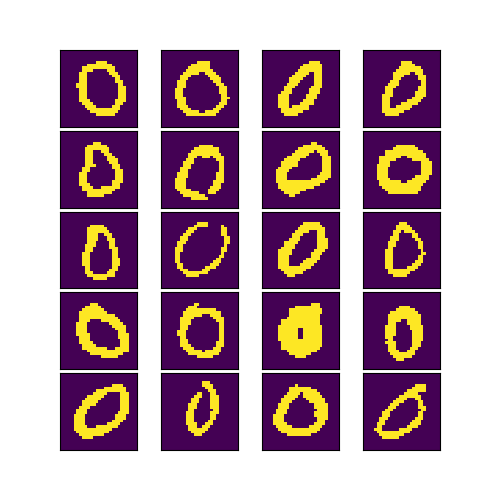}
    \includegraphics[width=1.5in]{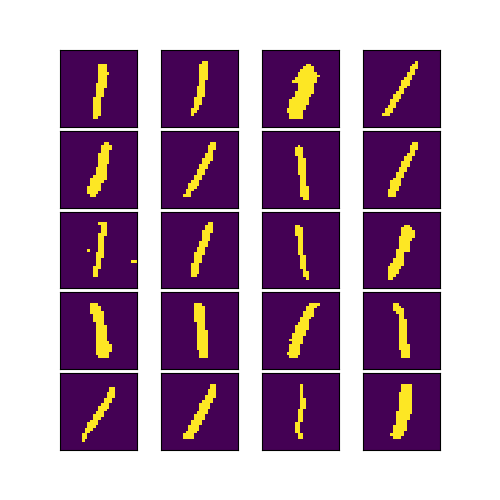}
    \includegraphics[width=1.5in]{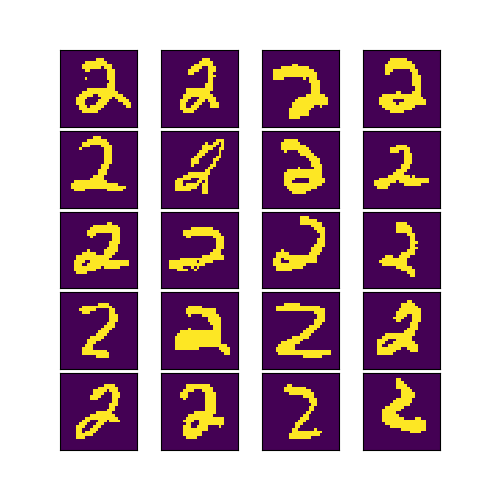} 
    
    \vspace{-.2cm}
    \includegraphics[width=1.5in]{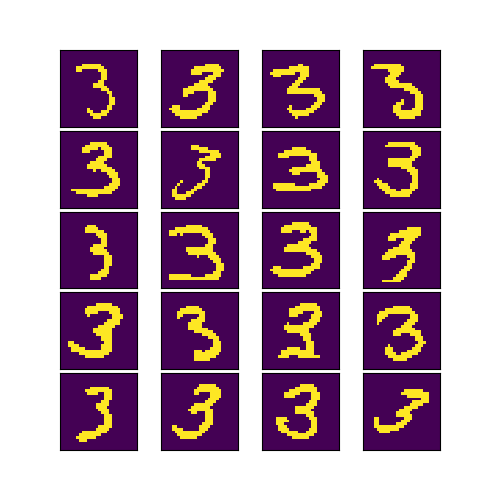}
    \includegraphics[width=1.5in]{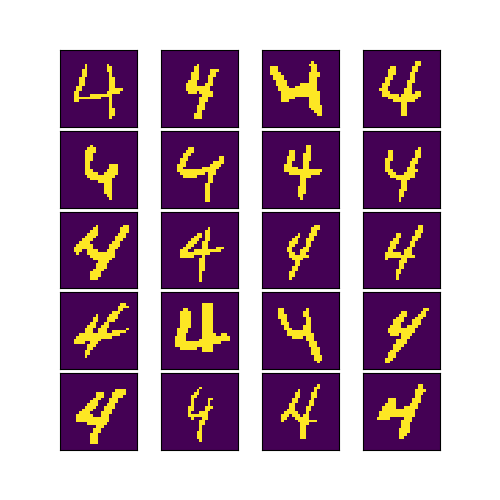}
    \vspace{-.2cm}
    \caption{Subset of MNIST handwritten digits used in one of the clustering trials.}
    \label{fig:mnist}
\end{figure}

In Table~\ref{tab:mnist} we report the accuracy of the clustering
results obtained using both our fixed point iteration method and the
randomized method for rounding the Max $k$-Cut relaxation.  For
comparison we also evaluate the results of clustering the data using
the $k$-means algorithm.  We see that our fixed point iteration method
for rounding the Max $k$-Cut relaxation gives the best accuracy when
compared both to the use of randomized rounding and the $k$-means
algorithm.\footnote{We used the scipy library implementation of
$k$-means.  Each trial involved 10 different random initializations of
the initial cluster centers using the $k$-means++ method.}

\begin{table}
\centering
\begin{tabular}{|l|c|}
\hline 
Method & Rand index \\
\hline 
Max $k$-Cut SDP with fixed point iteration & $0.907 \pm 0.026$ \\
\hline 
Max $k$-Cut SDP with randomized rounding & $0.874 \pm 0.037$ \\
\hline
k-means & $0.864 \pm 0.034$ \\
\hline
\end{tabular}
\caption{Clustering MNIST hardwritten digits.  We report both the mean
  and standard deviation of the Rand index over 20 trials with
  random subsets of the data.}
\label{tab:mnist}
\end{table}

\bibliography{biblio.bib}

\begin{thebibliography}{27}
\providecommand{\natexlab}[1]{#1}
\providecommand{\url}[1]{\texttt{#1}}
\expandafter\ifx\csname urlstyle\endcsname\relax
  \providecommand{\doi}[1]{doi: #1}\else
  \providecommand{\doi}{doi: \begingroup \urlstyle{rm}\Url}\fi

\bibitem[Arthur and Vassilvitskii(2007)]{kmeanspp}
David Arthur and Sergei Vassilvitskii.
\newblock k-means++: The advantages of careful seeding.
\newblock In \emph{ACM-SIAM Symposium on Discrete Algorithms}, pages
  1027--1035, 2007.

\bibitem[Awasthi et~al.(2015)Awasthi, Bandeira, Charikar, Krishnaswamy, Villar,
  and Ward]{Ward}
Pranjal Awasthi, Afonso~S. Bandeira, Moses Charikar, Ravishankar Krishnaswamy,
  Soledad Villar, and Rachel Ward.
\newblock Relax, no need to round: Integrality of clustering formulations.
\newblock In \emph{Conference on Innovations in Theoretical Computer Science},
  2015.

\bibitem[Barak et~al.(2014)Barak, Kelner, and Steurer]{Barak}
Boaz Barak, Jonathan~A Kelner, and David Steurer.
\newblock Rounding sum-of-squares relaxations.
\newblock In \emph{ACM Symposium on Theory of Computing}, pages 31--40, 2014.

\bibitem[Bertsimas et~al.(2021)Bertsimas, Orfanoudaki, and
  Wiberg]{bertsimas2021interpretable}
Dimitris Bertsimas, Agni Orfanoudaki, and Holly Wiberg.
\newblock Interpretable clustering: an optimization approach.
\newblock \emph{Machine Learning}, 110\penalty0 (1):\penalty0 89--138, 2021.

\bibitem[Bhargava and Kosaraju(2005)]{bhargava}
Ankur Bhargava and S~Rao Kosaraju.
\newblock Derandomization of dimensionality reduction and \mbox{SDP} based
  algorithms.
\newblock In \emph{Workshop on Algorithms and Data Structures}, pages 396--408,
  2005.

\bibitem[Cand{\`e}s and Tao(2010)]{Candes}
Emmanuel~J Cand{\`e}s and Terence Tao.
\newblock The power of convex relaxation: Near-optimal matrix completion.
\newblock \emph{IEEE Transactions on Information Theory}, 56\penalty0
  (5):\penalty0 2053--2080, 2010.

\bibitem[Cifuentes et~al.(2020)Cifuentes, Harris, and Sturmfels]{Cifuentes}
Diego Cifuentes, Corey Harris, and Bernd Sturmfels.
\newblock The geometry of \mbox{SDP}-exactness in quadratic optimization.
\newblock \emph{Mathematical Programming}, 182:\penalty0 399--428, 2020.

\bibitem[Engebretsen et~al.(2002)Engebretsen, Indyk, and O'Donnell]{odonnell}
Lars Engebretsen, Piotr Indyk, and Ryan O'Donnell.
\newblock Derandomized dimensionality reduction with applications.
\newblock In \emph{ACM-SIAM Symposium on Discrete Algorithms}, page 705–712,
  2002.

\bibitem[Felzenszwalb et~al.(2021)Felzenszwalb, Klivans, and Paul]{iter}
Pedro Felzenszwalb, Caroline Klivans, and Alice Paul.
\newblock Iterated linear optimization.
\newblock \emph{Quarterly of Applied Mathematics}, 79:\penalty0 601--615, 2021.

\bibitem[Frieze and Jerrum(1997)]{Frieze}
Alan Frieze and Mark Jerrum.
\newblock Improved approximation algorithms for max k-cut and max bisection.
\newblock \emph{Algorithmica}, 18\penalty0 (1):\penalty0 67--81, 1997.

\bibitem[Goemans and Williamson(1995)]{Goemans}
Michel Goemans and David Williamson.
\newblock Improved approximation algorithms for maximum cut and satisfiability
  problems using semidefinite programming.
\newblock \emph{Journal of the ACM}, 42\penalty0 (6):\penalty0 1115--1145,
  1995.

\bibitem[Jain and Dubes(1988)]{Jain}
Anil~K Jain and Richard~C Dubes.
\newblock \emph{Algorithms for clustering data}.
\newblock Prentice-Hall, 1988.

\bibitem[Kannan et~al.(2004)Kannan, Vempala, and Vetta]{Kannan}
Ravi Kannan, Santosh Vempala, and Adrian Vetta.
\newblock On clusterings: Good, bad and spectral.
\newblock \emph{Journal of the ACM}, 51\penalty0 (3):\penalty0 497--515, 2004.

\bibitem[Laurent and Poljak(1995)]{Laurent}
Monique Laurent and Svatopluk Poljak.
\newblock On a positive semidefinite relaxation of the cut polytope.
\newblock \emph{Linear Algebra and its Applications}, 223/224:\penalty0
  439--461, 1995.

\bibitem[LeCun et~al.()LeCun, Cortes, and Burges]{MNIST}
Yann LeCun, Corinna Cortes, and Christopher Burges.
\newblock {MNIST} database of handwritten digits.
\newblock {http://yann.lecun.com/exdb/mnist/}.

\bibitem[Lloyd(1982)]{Lloyd}
Stuart Lloyd.
\newblock Least squares quantization in \mbox{PCM}.
\newblock \emph{IEEE Transactions on Information Theory}, 28\penalty0
  (2):\penalty0 129--137, 1982.

\bibitem[MacKay(2003)]{Mackay}
David MacKay.
\newblock \emph{Information theory, inference and learning algorithms}.
\newblock Cambridge University press, 2003.

\bibitem[Mahajan and Ramesh(1999)]{Mahajan}
Sanjeev Mahajan and Hariharan Ramesh.
\newblock Derandomizing approximation algorithms based on semidefinite
  programming.
\newblock \emph{SIAM Journal on Computing}, 28\penalty0 (5):\penalty0
  1641--1663, 1999.

\bibitem[Meila and Shi(2001)]{Meila}
Marina Meila and Jianbo Shi.
\newblock A random walks view of spectral segmentation.
\newblock In \emph{AISTATS}, 2001.

\bibitem[Mixon et~al.(2017)Mixon, Villar, and Ward]{mixon2017clustering}
Dustin~G Mixon, Soledad Villar, and Rachel Ward.
\newblock Clustering subgaussian mixtures by semidefinite programming.
\newblock \emph{Information and Inference: A Journal of the IMA}, 6\penalty0
  (4):\penalty0 389--415, 2017.

\bibitem[Ng et~al.(2002)Ng, Jordan, and Weiss]{Ng}
Andrew~Y Ng, Michael~I Jordan, and Yair Weiss.
\newblock On spectral clustering: Analysis and an algorithm.
\newblock In \emph{Advances in Neural Information Processing Systems}, pages
  849--856, 2002.

\bibitem[Peng and Wei(2007)]{Peng}
Jiming Peng and Yu~Wei.
\newblock Approximating k-means-type clustering via semidefinite programming.
\newblock \emph{SIAM journal on optimization}, 18\penalty0 (1):\penalty0
  186--205, 2007.

\bibitem[Rand(1971)]{rand}
William~M Rand.
\newblock Objective criteria for the evaluation of clustering methods.
\newblock \emph{Journal of the American Statistical association}, 66\penalty0
  (336):\penalty0 846--850, 1971.

\bibitem[Shi and Malik(2000)]{Shi}
Jianbo Shi and Jitendra Malik.
\newblock Normalized cuts and image segmentation.
\newblock \emph{IEEE Transactions on Pattern analysis and Machine
  Intelligence}, 22\penalty0 (8):\penalty0 888--905, 2000.

\bibitem[Veenman et~al.(2002)Veenman, Reinders, and Backer]{Veenman02}
Cor~J. Veenman, Marcel J.~T. Reinders, and Eric Backer.
\newblock A maximum variance cluster algorithm.
\newblock \emph{IEEE Transactions on pattern analysis and machine
  intelligence}, 24\penalty0 (9):\penalty0 1273--1280, 2002.

\bibitem[Wang and Sha(2011)]{wang}
Meihong Wang and Fei Sha.
\newblock Information theoretical clustering via semidefinite programming.
\newblock In \emph{AISTATS}, 2011.

\bibitem[Weiss(1999)]{Weiss}
Yair Weiss.
\newblock Segmentation using eigenvectors: a unifying view.
\newblock In \emph{IEEE International Conference on Computer Vision}, 1999.

\end{thebibliography}

\end{document}